\newtheorem{theorem}{\bf Theorem}[section]
\newtheorem{lemma}[theorem]{\bf Lemma}
\newtheorem{proposition}[theorem]{\bf Proposition}
\newtheorem{remark}[theorem]{\bf Remark}
\newcommand{\qed}{\hfill $\square$ \bigskip}
\newcommand{\s}{\color{black}}
\begin{document}

\baselineskip=0.30in
\vspace*{30mm}

\begin{center}
{\LARGE \bf Computing weighted Szeged and PI indices from quotient graphs}
\bigskip \bigskip

{\large \bf Niko Tratnik
}
\bigskip\bigskip

\baselineskip=0.20in

\textit{Faculty of Natural Sciences and Mathematics, University of Maribor, Slovenia} \\
{\tt niko.tratnik@um.si, niko.tratnik@gmail.com}

\bigskip\medskip

(Received June 2, 2019)

\end{center}

\noindent
\begin{center} {\bf Abstract} \end{center}
The weighted Szeged index and the weighted vertex-PI index of a  graph $G$ are defined as {\s $wSz(G) = \displaystyle \sum_{e=uv \in E(G)} (\deg (u) + \deg (v))n_u(e)n_v(e)$ and $wPI_v(G) = \displaystyle \sum_{e=uv \in E(G)} (\deg(u) + \deg(v))( n_u(e) + n_v(e))$}, respectively, {\s where $n_u(e)$ denotes the number of vertices closer to $u$ than to $v$ and $n_v(e)$ denotes the number of vertices closer to $v$ than to $u$}. Moreover, the weighted edge-Szeged index and the weighted PI index are defined analogously. As the main result of this paper, we prove that if $G$ is a connected graph, then all these indices can be computed in terms of the corresponding indices of weighted quotient graphs with respect to a partition of the edge set that is coarser than the $\Theta^*$-partition. If $G$ is a benzenoid system or a phenylene, then it is possible to choose a partition of the edge set in such a way that the quotient graphs are trees. As a consequence, it is shown that for a benzenoid system the mentioned indices can be computed in sub-linear time with respect to the number of vertices. Moreover, closed formulas for linear phenylenes are also deduced. However, our main theorem is proved in a more general form and therefore, we present how it can be used to compute some other topological indices.

\noindent
{\bf Keywords:} weighted Szeged index, weighted PI index, quotient graph, benzenoid system, phenylene.
\baselineskip=0.30in

\section{Introduction}
{\s Topological indices are numerical quantities of a graph that describe some of its structural properties. In mathematical chemistry, pharmacy, and in environmental sciences such indices are used for the QSAR/QSPR studies in which physicochemical properties of compounds are correlated with their molecular structure. Therefore, such indices are usually referred to as molecular descriptors \cite{dehmer2}. However, topological indices are becoming indispensable also in the theory of complex systems and networks \cite{estrada}. More precisely, they are applied for measuring many local and global properties in communications networks, neural networks, social
networks, biological networks, etc. As a consequence, topological indices can be essentially used in various scientific fields. However, molecular descriptors usually need to be calculated on a large family of molecules and hence, effective methods and algorithms for the calculation are needed.

In this paper, we develop methods for computing some modifications of two well known distance-based molecular descriptors, the Szeged index and the PI index, and show how these methods can be used on important chemical graphs. It is known that the mentioned indices have many applications in drug modelling \cite{drug}, in networks \cite{klavzar-2013,pisanski}, and correlate highly with physicochemical properties and biological activities of a large number of diversified and complex compounds \cite{pi}. However, the indices considered in the present paper do not rely only on the distances in graphs, but also on the vertex degrees, which makes them even more interesting for measuring properties of chemical and other systems.}

Research on distance-based topological indices began in $1947$, when H.\,Wiener used the distances in the molecular graphs of alkanes to calculate their boiling points.
As a consequence, the \textit{Wiener index} of a connected graph $G$ was defined as
$$W(G) = \sum_{\lbrace u, v \rbrace \subseteq V(G)} d_G(u,v)$$
and nowadays it represents one of the most popular molecular descriptors. Later, I.\,Gutman 
introduced the \textit{Szeged index} of a connected graph $G$ as
$$Sz(G) = \sum_{e=uv \in E(G)}n_u(e)n_v(e),$$
where $n_u(e)$ denotes the number of vertices of $G$ whose distance to $u$ is smaller than the distance to $v$ and $n_v(e)$ denotes the number of vertices of $G$ whose distance to $v$ is smaller than the distance to $u$. The main motivation for the introduction of this index came from the fact that $Sz(T)=W(T)$ for any tree $T$. In 2000,
a similar molecular descriptor, named as the \textit{PI index}, was defined with
$$PI(G) = \sum_{e=uv \in E(G)}\big(m_u(e) + m_v(e)\big),$$
where $m_u(e)$ denotes the number of edges of $G$ whose distance to $u$ is smaller than the distance to $v$ and $m_v(e)$ denotes the number of edges of $G$ whose distance to $v$ is smaller than the distance to $u$.
Finally, the \textit{edge-Szeged index} 
and the \textit{vertex-PI index} 
were also introduced:
$$Sz_e(G) = \sum_{e=uv \in E(G)}m_u(e) m_v(e), \qquad PI_v(G)=\sum_{e=uv \in E(G)}\big(n_u(e) + n_v(e)\big).$$
Szeged and PI indices were extensively studied in the literature and therefore, many mathematical results are known for these indices. For some relevant recent investigations see \cite{aroc,arock1,klavzar_li,wang}. {\s Moreover, in \cite{tratnik} the reader can find the references to the papers where the above indices were introduced.}

On the other hand, the \textit{degree distance} (also called the \textit{Schultz index}) was proposed 
with the following formula {\s (see \cite{brez-trat} for some relevant references)}:
$$DD(G) = \sum_{\lbrace u,v \rbrace \subseteq V(G)}\big(\deg (u) + \deg (v)\big)d_G(u,v).$$
Inspired by this extension of the Wiener index, Ili\'{c} and Milosavljevi\'{c} \cite{ilic} proposed  modifications of the Szeged index and the vertex-PI index. Therefore, they introduced the \textit{weighted Szeged index} and the \textit{weighted vertex-PI index}, which are defined as
\begin{eqnarray*}
wSz(G) &= &  \sum_{e=uv \in E(G)}\big(\deg(u) + \deg(v)\big)n_u(e)n_v(e),\\ wPI_v(G) &= &  \sum_{e=uv \in E(G)}\big(\deg(u) + \deg(v)\big)\big( n_u(e) + n_v(e)\big).
\end{eqnarray*}
For some research on these indices see \cite{bok,naga,patt1,patt2} and \cite{ma1,ma2,patt,you}, respectively. However, it seems natural to consider also the \textit{weighted edge-Szeged index} and the \textit{weighted PI index}:
\begin{eqnarray*}
wSz_e(G) &= &  \sum_{e=uv \in E(G)}\big(\deg(u) + \deg(v)\big)m_u(e)m_v(e),\\ wPI(G) &= &  \sum_{e=uv \in E(G)}\big(\deg(u) + \deg(v)\big)\big( m_u(e) +m_v(e)\big).
\end{eqnarray*}

As the main result of the paper, we prove that for any connected graph $G$ these indices can be calculated from the corresponding indices of weighted quotient graphs obtained by a partition of the edge set that is coarser than the $\Theta^*$-partition. In addition, our main result is proved in a more general form. More precisely, it can be used to calculate the Szeged index and the vertex-PI index of any graph with one vertex-weight and one edge-weight or to calculate the edge-Szeged index and the PI index of a graph with two edge-weights. Such a method for computing a topological index is usually called a cut method \cite{klavzar-2015}. Recently, similar methods were developed for other indices, in particular for the Wiener index \cite{nad_klav}, the revised (edge-)Szeged index \cite{li}, the degree distance \cite{brez-trat}, the Mostar index \cite{tratnik_mostar}, etc. 
Furthermore, our results greatly generalize several earlier results (for an example, see \cite{cre-trat,klavzar-2015,tratnik}). 

By using our main result, we also describe methods that enable us to efficiently calculate the weighted (edge-)Szeged index and the weighted (vertex-)PI index on benzenoid systems and phenylenes. Moreover, we calculate the mentioned indices for a fullerene patch and deduce closed formulas for linear phenylenes. Finally, it is shown how the results can be used to compute also some other indices that are defined in a similar way.

\section{Preliminaries}

Unless stated otherwise, the graphs considered in this paper are simple, {\s finite, and connected}. For a graph $G$, the set of all the vertices is denoted by $V(G)$ and the set of edges by $E(G)$. Moreover, we define $d_G(u,v)$ to be the usual shortest-path distance between vertices $u, v\in V(G)$. In addition, the distance between a vertex $u \in V(G)$ and an edge $e=xy \in E(G)$ is defined as
\begin{equation*}
{d}_G(u,e) = \min \lbrace d_G(u,x), d_G(u, y) \rbrace\,.
\end{equation*}
Furthermore, for any $u \in V(G)$ we define the \textit{degree} of $u$, denoted by $\deg(u)$, as the number of vertices that are adjacent to $u$.
\smallskip

\noindent
Let $G$ be a graph and $e=uv$ an edge of $G$. Throughout the paper we will use the following notation:{\s
\begin{eqnarray*}
N_u(e|G) & = & \lbrace x \in V(G) \ | \ d_G(u,x) < d_G(v,x) \rbrace,\\ 
N_v(e|G) & = & \lbrace x \in V(G) \ | \ d_G(v,x) < d_G(u,x) \rbrace,\\
M_u(e|G) & = & \lbrace f \in E(G) \ | \ d_G(u,f) < d_G(v,f) \rbrace,\\
M_v(e|G) & = & \lbrace f \in E(G) \ | \ d_G(v,f) < d_G(u,f) \rbrace. 
\end{eqnarray*}}
\noindent
If $G$ is a graph, {\s we say that functions $w, \lambda: V(G) \rightarrow \mathbb{R}_0^+$ are \textit{vertex-weights} and functions $w', \lambda': E(G) \rightarrow \mathbb{R}_0^+$ are \textit{edge-weights}}, where $\mathbb{R}_0^+ = [0,\infty)$. A graph $G$ together with some weights will be called a \textit{weighted graph}. 
\smallskip

\noindent
Let $G$ be a connected graph, $e=uv \in E(G)$, $w$ a vertex-weight, and $\lambda'$ an edge-weight. We set
\begin{align*}
n_u(e|(G,w)) &= \sum_{x \in N_u(e|G)} w(x), & \quad n_v(e|(G,w)) &= \sum_{x \in N_v(e|G)} w(x),\\
m_u(e|(G,\lambda')) &= \sum_{f \in M_u(e|G)} \lambda'(f), & \quad m_v(e|(G,\lambda')) &= \sum_{f \in M_v(e|G)} \lambda'(f).
\end{align*}
\noindent
If $M_u(e|G) = \emptyset$, we also set $m_u(e|(G,\lambda'))=0$.
\smallskip

\noindent
Let $G$ be a connected graph, $w$ a vertex-weight, and $w',\lambda'$ two edge-weights. We now define the Szeged index and the vertex-PI index of $(G,w,w')$, the edge-Szeged index and the PI index of $(G,\lambda',w')$, and the \textit{total-Szeged index} of $(G,w,\lambda',w')$ in the following way:
\begin{eqnarray*}
Sz(G,w,w') & = & \sum_{e=uv \in E(G)} w'(e)n_u(e|(G,w))n_v(e|(G,w)),\\
PI_v(G,w,w') & = & \sum_{e=uv \in E(G)} w'(e)\big(n_u(e|(G,w)) + n_v(e|(G,w)) \big),\\
Sz_e(G,\lambda',w') & = & \sum_{e=uv \in E(G)} w'(e)m_u(e|(G,\lambda'))m_v(e|(G,\lambda')),\\
PI(G,\lambda',w') & = & \sum_{e=uv \in E(G)} w'(e)\big( m_u(e|(G,\lambda')) + m_v(e|(G,\lambda')) \big),\\
Sz_t(G,w,\lambda',w') & = &  \sum_{e=uv \in E(G)} \big[ w'(e) \big( n_u(e|(G,w)) + m_u(e|(G,\lambda')) \big)  \cdot \\
& & \cdot \big( n_v(e|(G,w)) + m_v(e|(G,\lambda')) \big) \big].
\end{eqnarray*}


\noindent
Two edges $e_1 = u_1 v_1$ and $e_2 = u_2 v_2$ of graph $G$ are in relation $\Theta$, $e_1 \Theta e_2$, if
$$d(u_1,u_2) + d(v_1,v_2) \neq d(u_1,v_2) + d(u_2,v_1).$$
Recall that the mentioned relation is also known as Djokovi\' c-Winkler relation.
Moreover, relation $\Theta$ is reflexive and symmetric, but not always transitive. Therefore, its transitive closure (i.e.\ the smallest transitive relation containing $\Theta$) will be denoted by $\Theta^*$. 
\smallskip

\noindent
The {\em hypercube} $Q_n$ of dimension $n$ is defined in the following way: 
all vertices of $Q_n$ are binary strings of length $n$
and two vertices of $Q_n$ are adjacent if the corresponding strings differ in exactly one position. A subgraph $H$ of a graph $G$ is called an \textit{isometric subgraph} if for each $u,v \in V(H)$ it holds $d_H(u,v) = d_G(u,v)$. Any isometric subgraph of a hypercube is called a {\em partial cube}. {\s It is known that partial cubes form a large class of graphs with many applications (for an example, see \cite{brez-trat,cre-trat,klavzar-2015,nad_klav}). More precisely, various families of molecular graphs belong to partial cubes (benzenoid systems, trees, phenylenes, cyclic phenylenes, polyphenyls). For more information about partial cubes see \cite{klavzar-book}.}
\smallskip

\noindent
The subgraph of $G$ induced by $S \subseteq V(G)$ will be denoted by $\langle S \rangle$. Moreover, a subgraph $H$ of $G$ is called {\it convex} if for arbitrary vertices $u,v \in V(H)$ every shortest path between $u$ and $v$ in $G$ is also contained in $H$. The following theorem proved by Djokovi\' c and Winkler presents two fundamental characterizations of partial cubes.
\begin{theorem} \cite{klavzar-book} \label{th:partial-k} For a connected graph $G$, the following statements are equivalent:
\begin{itemize}
\item [(i)] $G$ is a partial cube.
\item [(ii)] $G$ is bipartite, and $\langle N_a(e|G) \rangle $ and $\langle N_b(e|G) \rangle$ are convex subgraphs of $G$ for all edges $e=ab \in E(G)$.
\item [(iii)] $G$ is bipartite and $\Theta = \Theta^*$.
\end{itemize}
\end{theorem}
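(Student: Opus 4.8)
I would prove the chain of implications (i)~$\Rightarrow$~(ii)~$\Rightarrow$~(iii)~$\Rightarrow$~(i). For \emph{(i)~$\Rightarrow$~(ii)}, fix an isometric embedding of $G$ into a hypercube $Q_n$. Hypercubes are bipartite and any isometric subgraph of a bipartite graph is bipartite, so $G$ is bipartite. Let $e=ab\in E(G)$; viewed as an edge of $Q_n$ it ``flips'' some coordinate $i$. Since $d_G$ and $d_{Q_n}$ agree on $V(G)$, we get $N_a(e|G)=V(G)\cap H$, where $H$ is the set of strings of $Q_n$ agreeing with $a$ in position $i$. As $H$ induces a subcube, it is convex in $Q_n$, so a shortest $G$-path between two vertices of $N_a(e|G)$ — being a shortest $Q_n$-path by isometry — stays inside $H$, hence inside $N_a(e|G)$. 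Thus $\langle N_a(e|G)\rangle$ is convex, and likewise $\langle N_b(e|G)\rangle$.

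For \emph{(ii)~$\Rightarrow$~(iii)} it suffices to show that $\Theta$ is transitive. Since $G$ is bipartite, for every edge $e=ab$ and every vertex $z$ one has $|d_G(a,z)-d_G(b,z)|=1$, so $\{N_a(e|G),N_b(e|G)\}$ is a partition of $V(G)$; let $F_e$ be the set of edges with one endpoint in each part. A direct distance computation (a non-crossing edge makes the two sums in the definition of $\Theta$ equal, a crossing edge makes them differ by exactly $2$) shows that $e\,\Theta\,f$ if and only if $f\in F_e$. The remaining point is that $f\in F_e$ forces $F_f=F_e$: using the convexity of the half-graphs for both $e$ and $f$ and tracking shortest paths across the cut $F_e$, one shows that the partition $\{N_a(e|G),N_b(e|G)\}$ coincides with $\{N_x(f|G),N_y(f|G)\}$ for $f=xy\in F_e$. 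Hence the sets $F_e$ are exactly the $\Theta$-classes, they partition $E(G)$, and $\Theta=\Theta^*$.

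For \emph{(iii)~$\Rightarrow$~(i)} — the substantial part — let $F_1,\dots,F_k$ be the $\Theta$-classes. Using bipartiteness, the fact that no shortest path contains two edges of one $\Theta$-class, and $\Theta=\Theta^*$, one shows that deleting $F_i$ leaves exactly two components $W_i^0,W_i^1$, each inducing a convex subgraph. Fixing a base vertex, define $\alpha\colon V(G)\to\{0,1\}^k$ by letting $\alpha(v)_i$ record which of $W_i^0,W_i^1$ contains $v$. Since the endpoints of an edge differ exactly in the coordinate of the class of that edge, $\alpha$ is an injective homomorphism into $Q_k$; and $\alpha$ is isometric because a shortest $u$--$v$ path crosses each class in $\alpha(u)\triangle\alpha(v)$ exactly once (so $d_G(u,v)\le$ the Hamming distance), while every $u$--$v$ walk crosses each such class an odd number of times (giving the reverse inequality). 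Hence $G$ embeds isometrically into $Q_k$, i.e.\ $G$ is a partial cube.

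The main obstacle is the implication (iii)~$\Rightarrow$~(i): establishing that every $\Theta^*$-class is a ``convex cut'' splitting $G$ into exactly two pieces, and then that the resulting coordinatization preserves distances and not merely adjacency. The auxiliary lemma that a geodesic meets each $\Theta$-class at most once, together with the interplay between bipartiteness and the transitivity of $\Theta$, forms the technical heart of that step; the other two implications are comparatively routine once the half-graphs are identified with traces of subcubes.
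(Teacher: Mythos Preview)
The paper does not supply its own proof of this theorem: it is stated with a citation to \cite{klavzar-book} (the Handbook of Product Graphs) and used as background, so there is nothing to compare your argument against within the paper itself. Your outline is the classical Djokovi\'c--Winkler argument and is essentially correct.

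Two small points worth tightening. In (ii)~$\Rightarrow$~(iii), the step ``$f\in F_e$ forces $F_f=F_e$'' is the crux and you gloss it; the clean way is to show, for $f=xy$ with $x\in N_a(e|G)$, that $N_x(f|G)=N_a(e|G)$ by using convexity of $\langle N_a(e|G)\rangle$ to keep shortest $x$--$z$ paths inside $N_a(e|G)$ for every $z\in N_a(e|G)$, and then a one-line distance comparison via the edge $xy$. In (iii)~$\Rightarrow$~(i) your two inequalities are stated with the directions swapped: the fact that every $u$--$v$ walk crosses each separating class at least once gives $d_G(u,v)\ge d_{Q_k}(\alpha(u),\alpha(v))$, while the lemma ``a geodesic meets each $\Theta$-class at most once'' (combined with parity) gives $d_G(u,v)\le d_{Q_k}(\alpha(u),\alpha(v))$. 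With those fixes the proposal is a faithful reproduction of the textbook proof.
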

In addition, we recall that if $G$ is a partial cube and $M$ a $\Theta$-class of $G$, then $G \setminus M$ has exactly two connected components, namely $\langle N_a(e|G) \rangle $ and {\s $\langle N_b(e|G) \rangle$}, where $e=ab \in M$ {\s (the details can be found in \cite{klavzar-book})}.
\smallskip

\noindent
Let $ \mathcal{E} = \lbrace M_1, \ldots, M_r \rbrace$ be the $\Theta^*$-partition of the set $E(G)$. Then we say that a partition $\mathcal{F}=\lbrace F_1, \ldots, F_k \rbrace$ of $E(G)$ is \textit{coarser} than $\mathcal{E}$
if each set $F_i$ is the union of one or more $\Theta^*$-classes of $G$. In such a case $\mathcal{F}$ will be shortly called a \textit{c-partition}.
\smallskip

\noindent
Suppose $G$ is a graph and $F \subseteq E(G)$. The \textit{quotient graph} $G / F$ is the graph whose vertices are connected components of the graph $G \setminus F$, such that two components $X$ and $Y$ are adjacent in $G / F$ if some vertex in $X$ is
adjacent to a vertex of $Y$ in $G$. If $E=XY$ is an edge in $G/F$, then we denote by $\widehat{E}$ the set of edges of $G$ that have one end vertex in $X$ and the other end vertex in $Y$, i.e. $\widehat{E}=  \lbrace xy \in E(G)\,|\,x \in V(X), y \in V(Y) \rbrace $. 
\smallskip

\noindent
Throughout the paper, let $\lbrace F_1, \ldots, F_k \rbrace$ be a c-partition of the set $E(G)$. Moreover, the quotient graph $G/F_i$ will be shortly denoted as $G_i$ for any $i \in \lbrace 1, \ldots, k \rbrace$. In addition, we define the function $\ell_i: V(G) \rightarrow V(G_i)$ as follows: for any $u \in V(G)$, let $\ell_i(u)$ be the connected component $U$ of the graph $G \setminus F_i$ such that $u \in V(U)$. The next lemma was obtained in \cite{nad_klav}, but the proof can be also found in \cite{tratnik_grao}.

\begin{lemma} \cite{nad_klav,tratnik_grao} \label{distance}
If $u,v \in V(G)$ are two vertices, then 
$$d_G(u,v) = \sum_{i=1}^k d_{G_i}(\ell_i(u),\ell_i(v)).$$
\end{lemma}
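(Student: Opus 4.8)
The plan is to prove Lemma~\ref{distance} by induction on $k$, the number of parts in the c-partition, reducing in each step to the case $k=2$ and ultimately to the case of a single $\Theta^*$-class. However, the cleanest route is arguably not induction on $k$ but a direct argument based on the structure of shortest paths, combined with the well-known fact (Theorem~\ref{th:partial-k}) that $\Theta = \Theta^*$ on bipartite graphs and that a $\Theta$-class separates a partial cube into exactly two convex components. Since the lemma is stated for an arbitrary connected graph $G$ (not only partial cubes), I would first recall the standard device: the \emph{canonical isometric embedding} of $G$ into a Cartesian product $G_1 \square \cdots \square G_k$ of the quotient graphs, where the factors correspond exactly to the $\Theta^*$-classes (or, after coarsening, to the parts $F_i$). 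Under this embedding the map $u \mapsto (\ell_1(u), \ldots, \ell_k(u))$ is well-defined, and the content of the lemma is precisely that this embedding is \emph{isometric}, i.e.\ preserves distances, with the product being taken in the $\ell_1$ (Manhattan) metric.

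\textbf{Key steps.} First I would establish the inequality $d_G(u,v) \ge \sum_{i=1}^k d_{G_i}(\ell_i(u),\ell_i(v))$. For this, take any shortest $u$--$v$ path $P$ in $G$ and project it via $\ell_i$ onto $G_i$: each edge of $P$ either lies inside some component of $G \setminus F_i$ (and is contracted to a loop, contributing nothing) or lies in $F_i$ (and maps to an edge of $G_i$). Hence the projected walk from $\ell_i(u)$ to $\ell_i(v)$ has length equal to $|E(P) \cap F_i|$, which is at least $d_{G_i}(\ell_i(u),\ell_i(v))$. Summing over $i$ and using that $\{F_1,\ldots,F_k\}$ partitions $E(G)$ gives $d_G(u,v) = |E(P)| = \sum_{i=1}^k |E(P)\cap F_i| \ge \sum_{i=1}^k d_{G_i}(\ell_i(u),\ell_i(v))$. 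The reverse inequality is the substantive direction: I would argue that a shortest $u$--$v$ path in $G$ crosses each $\Theta^*$-class (hence each $F_i$) only a ``minimal'' number of times. Concretely, one shows that no shortest path uses two edges from the same $\Theta$-class, and moreover that along any shortest path the number of edges used from a given $\Theta^*$-class $M_j$ equals $1$ if $u$ and $v$ lie in different components of $G \setminus M_j$ and $0$ otherwise; this is the classical fact underpinning the canonical embedding and follows from the convexity/separation properties recalled after Theorem~\ref{th:partial-k} together with the transitivity of $\Theta^*$. Aggregating the $\Theta^*$-classes inside each $F_i$ then yields $|E(P) \cap F_i| = d_{G_i}(\ell_i(u),\ell_i(v))$, which combined with the first inequality forces equality throughout.

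\textbf{Alternative via induction.} If one prefers to avoid invoking the canonical embedding machinery wholesale, the induction on $k$ works as follows. For $k=1$ there is nothing to prove since $G_1 = G/E(G)$ would be a single vertex only if $F_1 = E(G)$; more precisely the base case is $F_1$ a union of $\Theta^*$-classes and $F_2,\ldots,F_k$ empty is not allowed, so one reduces to comparing $G$ with $G/F_1$ and $G/(E(G)\setminus F_1)$, i.e.\ the genuine base case is $k=2$. There one uses that $G$ isometrically embeds into $(G/F_1)\,\square\,(G/F_2)$ when $F_1$ is a union of $\Theta^*$-classes --- this is exactly where the hypothesis ``coarser than the $\Theta^*$-partition'' is needed, since for an arbitrary edge cut the quotient need not behave well. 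The inductive step peels off one part $F_k$, applies the $k=2$ case to the partition $\{F_k,\, E(G)\setminus F_k\}$, and then applies the induction hypothesis to the graph $G/F_k$ together with the induced c-partition $\{F_1,\ldots,F_{k-1}\}$ of its edge set.

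\textbf{Main obstacle.} The delicate point --- and the one I would spend the most care on --- is the reverse inequality, specifically the claim that a shortest path in $G$ meets each $\Theta^*$-class at most once and meets it exactly once when the endpoints are separated by that class. For general connected graphs (not assumed to be partial cubes) this is not literally true for $\Theta$-classes, but it \emph{is} true for $\Theta^*$-classes, and the proof rests on Winkler's theorem that $\Theta^*$ is precisely the equivalence relation whose classes are the fibers of the canonical isometric embedding; equivalently one must verify that contracting all edges outside a single $\Theta^*$-class yields a graph in which the image path is still shortest. Since this is an established result (the lemma is cited to \cite{nad_klav,tratnik_grao}), in the write-up I would either quote the canonical embedding theorem directly or reproduce the short argument that a shortest path cannot use two $\Theta$-related edges (if it did, a standard exchange argument using the definition of $\Theta$ would produce a shorter path), and then lift this from $\Theta$ to $\Theta^*$ via transitivity of the fiber partition. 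Everything else is bookkeeping: splitting $E(P)$ according to the partition $\{F_i\}$ and recognizing each piece as a distance in the appropriate quotient.
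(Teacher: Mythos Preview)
The paper does not actually prove this lemma; it is stated with citations to \cite{nad_klav,tratnik_grao} and used as a black box. So there is no in-paper proof to compare your proposal against.

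On the substance of your outline: the projection argument for the inequality $d_G(u,v) \ge \sum_i d_{G_i}(\ell_i(u),\ell_i(v))$ is correct and standard. Your proposed direct argument for the reverse inequality, however, contains a genuine error. You assert that along any shortest path the number of edges from a given $\Theta^*$-class $M_j$ equals $0$ or $1$, and that ``this is not literally true for $\Theta$-classes, but it \emph{is} true for $\Theta^*$-classes.'' In fact you have this backwards. It is a general fact that no two edges on a shortest path are $\Theta$-related (the four-distance computation you allude to gives this). But this does \emph{not} pass to $\Theta^*$ by transitivity: two edges on a geodesic can be $\Theta^*$-equivalent through a chain of intermediaries without being $\Theta$-related themselves. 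For a concrete counterexample take $G = C_5$: all five edges lie in a single $\Theta^*$-class, yet the shortest path between two vertices at distance $2$ uses two of them. The convexity/separation statement after Theorem~\ref{th:partial-k} that you invoke to justify the crossing bound is explicitly a characterization of partial cubes and does not apply to an arbitrary connected $G$, which is the setting of the lemma.

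Your fallback of invoking the Graham--Winkler canonical isometric embedding theorem is the right way through, and is exactly what the cited references do. That theorem gives directly that the map $u \mapsto (\ell_1(u),\ldots,\ell_k(u))$ into $G_1 \,\square\, \cdots \,\square\, G_k$ is an isometry, whence $|E(P)\cap F_i| = d_{G_i}(\ell_i(u),\ell_i(v))$ for every shortest path $P$; but its proof for general connected graphs is not the elementary crossing-count argument you sketched, and in your write-up you should cite it rather than attempt to rederive it from the partial-cube machinery of Theorem~\ref{th:partial-k}.
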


\noindent
The following lemma follows directly by Lemma \ref{distance}.
\begin{lemma} \cite{tratnik_mostar} \label{sosednji}
If $e =uv \in F_i$, where $i \in \lbrace 1, \ldots, k \rbrace$, then $U=\ell_i(u)$ and $V=\ell_i(v)$ are adjacent vertices in $G_i$.
\end{lemma}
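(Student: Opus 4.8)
The plan is to deduce everything directly from Lemma \ref{distance}. First I would note that since $u$ and $v$ are adjacent in $G$ we have $d_G(u,v)=1$, so Lemma \ref{distance} gives $\sum_{j=1}^{k} d_{G_j}(\ell_j(u),\ell_j(v)) = 1$. Each summand $d_{G_j}(\ell_j(u),\ell_j(v))$ is a nonnegative integer, being a graph distance, so exactly one of them equals $1$ and all the others are $0$.

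Next I would identify which summand is nonzero. For any index $j \neq i$, the edge $e=uv$ does not lie in $F_j$, because $F_1,\dots,F_k$ form a partition of $E(G)$ and $e \in F_i$; hence $e$ is still an edge of $G \setminus F_j$. Consequently $u$ and $v$ belong to the same connected component of $G \setminus F_j$, which means $\ell_j(u)=\ell_j(v)$ and therefore $d_{G_j}(\ell_j(u),\ell_j(v))=0$. Putting this together with the previous paragraph forces $d_{G_i}(\ell_i(u),\ell_i(v)) = 1$, and a pair of vertices in a graph is at distance $1$ precisely when the two vertices are distinct and adjacent. Thus $U=\ell_i(u)$ and $V=\ell_i(v)$ are adjacent in $G_i$, as claimed.

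There is really no substantial obstacle in this argument; it is a one-line consequence of Lemma \ref{distance}. The only points that deserve a moment of care are that graph distances are nonnegative integers, so a single unit of distance cannot be distributed among two or more of the quotient graphs, and that the maps $\ell_j$ are well defined, i.e.\ that each vertex of $G$ does determine a unique component of $G\setminus F_j$; both are immediate from the definitions already set up above.
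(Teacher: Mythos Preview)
Your proof is correct and follows exactly the same route as the paper: observe that $\ell_j(u)=\ell_j(v)$ for all $j\neq i$, and then apply Lemma~\ref{distance} to conclude $d_{G_i}(\ell_i(u),\ell_i(v))=1$. You have simply spelled out a couple of details (why the $j\neq i$ summands vanish, and the nonnegative-integer observation) that the paper leaves implicit.
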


\begin{proof}
Obviously, for any $j \in \lbrace 1, \ldots, k \rbrace$, $j \neq i$, it holds $\ell_j(u) = \ell_j(v)$ and therefore $d_{G_j}(\ell_j(u),\ell_j(v))=0$. By Lemma \ref{distance}, we now obtain
$d_{G_i}(\ell_i(u),\ell_i(v))=1$. \qed
\end{proof}
\section{The main result}


In this section, we prove that the (edge-)Szeged index and the (vertex-)PI index of a weighted graph can be computed from the corresponding weighted quotient graphs. 

Let $G$ be a connected graph and let $\lbrace F_1, \ldots, F_k \rbrace$ be a c-partition of the set $E(G)$. Suppose that $w: V(G) \rightarrow \mathbb{R}_0^+$ and $w',\lambda': E(G) \rightarrow \mathbb{R}_0^+$ are given weights. Then we define the corresponding weights $w_i, \lambda_i: V(G_i) \rightarrow \mathbb{R}_0^+$ and {\s $w_i',\lambda_i': E(G_i) \rightarrow \mathbb{R}_0^+$} on the quotient graph $G_i$, where $i \in \lbrace 1, \ldots, k \rbrace$, in the following way:

\begin{itemize}
\item {\s $w_i(X) = \displaystyle \sum_{x \in V(X)} w(x)$} for any $X \in V(G_i)$,
\item {\s $\lambda_i(X) = \displaystyle \sum_{e \in E(X)} \lambda'(e)$} for any $X \in V(G_i)$, 
\item {\s $w_i'(E) = \displaystyle \sum_{e \in \widehat{E}} w'(e)$} for any $E \in E(G_i)$,
\item {\s $\lambda_i'(E) = \displaystyle \sum_{e \in \widehat{E}} \lambda'(e)$} for any $E \in E(G_i)$.
\end{itemize}

We start with the following lemma from \cite{tratnik_mostar}, which will be needed in the proof of the main result.

\begin{lemma} \label{pomoc} \cite{tratnik_mostar} If $e=uv \in F_i$, where $i \in \lbrace 1, \ldots, k \rbrace$, $U=\ell_i(u)$, $V=\ell_i(v)$, and $E=UV \in E(G_i)$, then
$$n_u(e|(G,w)) = n_U(E|(G_i,w_i)),$$
$$n_v(e|(G,w)) = n_V(E|(G_i,w_i)).$$
\end{lemma}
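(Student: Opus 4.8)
The plan is to show that the vertex set $N_u(e|G)$ decomposes as the disjoint union of the vertex sets of exactly those connected components of $G\setminus F_i$ that belong to $N_U(E|G_i)$; the weighted identity then follows at once by summing $w$ over these vertices and invoking the definition $w_i(X)=\sum_{x\in V(X)}w(x)$. Thus the one thing to establish is the equivalence: for every $x\in V(G)$, writing $X=\ell_i(x)$, one has $x\in N_u(e|G)$ if and only if $X\in N_U(E|G_i)$.

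To prove this equivalence I would start from Lemma \ref{distance}, which gives
$$d_G(u,x)=\sum_{j=1}^k d_{G_j}(\ell_j(u),\ell_j(x)),\qquad d_G(v,x)=\sum_{j=1}^k d_{G_j}(\ell_j(v),\ell_j(x)).$$
Since $e=uv\in F_i$ and $\{F_1,\dots,F_k\}$ is a partition of $E(G)$, the edge $e$ lies in no $F_j$ with $j\ne i$, so $u$ and $v$ remain in the same component of $G\setminus F_j$; that is, $\ell_j(u)=\ell_j(v)$ for all $j\ne i$. Hence all terms with $j\ne i$ cancel upon subtracting, and
$$d_G(u,x)-d_G(v,x)=d_{G_i}(U,X)-d_{G_i}(V,X).$$
Therefore $d_G(u,x)<d_G(v,x)$ holds exactly when $d_{G_i}(U,X)<d_{G_i}(V,X)$, which is precisely the condition defining $x\in N_u(e|G)$ on the left and $X\in N_U(E|G_i)$ on the right.

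Because the components of $G\setminus F_i$ partition $V(G)$, this equivalence yields $N_u(e|G)=\bigcup_{X\in N_U(E|G_i)}V(X)$ as a disjoint union, so
$$n_u(e|(G,w))=\sum_{x\in N_u(e|G)}w(x)=\sum_{X\in N_U(E|G_i)}\ \sum_{x\in V(X)}w(x)=\sum_{X\in N_U(E|G_i)}w_i(X)=n_U(E|(G_i,w_i)).$$
The identity $n_v(e|(G,w))=n_V(E|(G_i,w_i))$ then follows by the same argument with the roles of $u$ and $v$ (and of $U$ and $V$) interchanged. I do not expect a real obstacle here: the only delicate point is the cancellation step, i.e. justifying $\ell_j(u)=\ell_j(v)$ for $j\ne i$, which rests solely on $\{F_1,\dots,F_k\}$ being a partition of $E(G)$ so that $e$ belongs to $F_i$ only. (As a side remark, the case $x\in\{u,v\}$ of the equivalence recovers Lemma \ref{sosednji}.)
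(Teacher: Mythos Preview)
Your argument is correct. The paper does not supply its own proof of this lemma but cites it from \cite{tratnik_mostar}; however, your approach is exactly the technique the paper uses for the edge analogue (Lemma~\ref{pomoc1}, case (b)): apply Lemma~\ref{distance} and use $\ell_j(u)=\ell_j(v)$ for $j\ne i$ to obtain $d_G(u,x)-d_G(v,x)=d_{G_i}(U,X)-d_{G_i}(V,X)$, then read off the equivalence and sum the weights.
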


\noindent
The next lemma shows that the set of edges lying closer to some end vertex of an edge $e=uv$ can be obtained from the corresponding vertices and edges of a quotient graph. The main idea of the proof can be found inside the proof of Theorem 3.1 in \cite{li}. However, for the sake of completeness we briefly describe the proof.
 
\begin{lemma} \label{pomoc1}
{\s Suppose $e=uv \in F_i$, where $i \in \lbrace 1, \ldots, k \rbrace$, $U=\ell_i(u)$, $V=\ell_i(v)$, and $E=UV \in E(G_i)$. Then}
$$M_u(e|G) = \left( \bigcup_{X \in N_{U}(E|G_i)} E(X) \right) \bigcup \left( \bigcup_{F \in M_{U}(E|G_i)} \widehat{F} \right),$$
$$M_v(e|G) = \left( \bigcup_{X \in N_{V}(E|G_i)} E(X) \right) \bigcup \left( \bigcup_{F \in M_{V}(E|G_i)} \widehat{F} \right).$$
\end{lemma}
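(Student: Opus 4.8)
The plan is to reduce everything to the observation that the edge set of $G$ splits naturally according to $F_i$: every $f \in E(G)$ either lies inside a single connected component of $G \setminus F_i$, in which case $f \in E(X)$ for a unique $X \in V(G_i)$, or it joins two different components, in which case $f \in F_i$ and, by Lemma~\ref{sosednji}, its two end vertices lie in adjacent vertices $X \neq Y$ of $G_i$, so that $f \in \widehat{F}$ for a unique edge $F = XY \in E(G_i)$. Thus $\{E(X) : X \in V(G_i)\} \cup \{\widehat{F} : F \in E(G_i)\}$ is a partition of $E(G)$, and it suffices to decide, for each block, whether it is contained in $M_u(e|G)$; the formula for $M_v(e|G)$ then follows by interchanging the roles of $u$ and $v$.

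The computations in both cases rest on one elementary remark: since $e = uv \in F_i$, the edge $e$ survives in $G \setminus F_j$ for every $j \neq i$, so $\ell_j(u) = \ell_j(v)$. First I would take an edge $f = xy$ with $x, y \in V(X)$ for some component $X$. Applying Lemma~\ref{distance} to $d_G(u,x)$ and $d_G(v,x)$, and to $d_G(u,y)$ and $d_G(v,y)$, and subtracting, every term with index $j \neq i$ cancels, leaving $d_G(u,x) - d_G(v,x) = d_G(u,y) - d_G(v,y) = d_{G_i}(U,X) - d_{G_i}(V,X)$. Adding this common constant to both arguments of the minima defining $d_G(u,f)$ and $d_G(v,f)$ yields $d_G(u,f) - d_G(v,f) = d_{G_i}(U,X) - d_{G_i}(V,X)$, so (recalling $E = UV$) $f \in M_u(e|G)$ if and only if $X \in N_U(E|G_i)$.

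Next I would take a crossing edge $f = xy \in \widehat{F}$ with $x \in V(X)$, $y \in V(Y)$, $F = XY$. Now $f \in F_i$, hence $f \notin F_j$ for $j \neq i$, which forces $\ell_j(x) = \ell_j(y)$ for all $j \neq i$. Setting $c = \sum_{j \neq i} d_{G_j}(\ell_j(u), \ell_j(x))$, Lemma~\ref{distance} gives $d_G(u,x) = d_{G_i}(U,X) + c$ and $d_G(u,y) = d_{G_i}(U,Y) + c$, so $d_G(u,f) = \min\{d_{G_i}(U,X), d_{G_i}(U,Y)\} + c = d_{G_i}(U,F) + c$; using $\ell_j(u) = \ell_j(v)$ for $j \neq i$, the same constant $c$ reappears for $v$, so $d_G(v,f) = d_{G_i}(V,F) + c$. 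Subtracting, $f \in M_u(e|G)$ if and only if $d_{G_i}(U,F) < d_{G_i}(V,F)$, i.e.\ if and only if $F \in M_U(E|G_i)$. Collecting the two cases over the partition of $E(G)$ described above gives the asserted description of $M_u(e|G)$, and that of $M_v(e|G)$ follows symmetrically.

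There is no deep obstacle here; the delicate point is purely the bookkeeping that annihilates the ``$j \neq i$'' contributions. For an inside-component edge what is needed is $\ell_j(u) = \ell_j(v)$, whereas for a crossing edge one needs in addition $\ell_j(x) = \ell_j(y)$, which is precisely where $\widehat{F} \subseteq F_i$ enters; one must also use $X \neq Y$ (guaranteed by Lemma~\ref{sosednji} since $G_i$ is simple) so that $\min\{d_{G_i}(U,X), d_{G_i}(U,Y)\}$ is legitimately the vertex-to-edge distance $d_{G_i}(U,F)$.
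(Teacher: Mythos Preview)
Your argument is correct and follows essentially the same route as the paper: split edges into those inside a component of $G\setminus F_i$ versus those in $F_i$, and in each case use Lemma~\ref{distance} together with $\ell_j(u)=\ell_j(v)$ for $j\neq i$ (and, for crossing edges, $\ell_j(x)=\ell_j(y)$) to reduce $d_G(u,f)-d_G(v,f)$ to the corresponding difference in $G_i$. The only difference is presentational: the paper defers the actual distance computations to \cite{li}, whereas you carry them out explicitly, making your version self-contained.
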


\begin{proof}
Let $f=wz \in E(G)$ be an edge different from $e$. Suppose that $f \in F_j$ for some $j \in \lbrace 1,\ldots, k \rbrace$.
Consider the following two options.
\begin{itemize}
\item [(a)] {\s For $i=j$}: by Lemma \ref{sosednji}, $F=\ell_i(w)\ell_i(z)$ is an edge of $G_i$. By using Lemma \ref{distance}, we can prove in {\s a similar way} as in \cite{li} (see part (2), Case 1 in the proof of {\s Theorem 3.1 \cite{li}}) that it holds
$$d_G(u,f) - d_G(v,f) = d_{G_i}(U, F) - d_{G_i}(V,F).$$
We can see from the above equality that $d_G(u,f) < d_G(v,f)$ holds if and only if $d_{G_i}(U, F) < d_{G_i}(V,F)$. Therefore, $f$ belongs to $M_u(e|G)$ if and only if there exists $F \in M_U(E|G_i)$ such that $f$ belongs to $\widehat{F}$.

\item [(b)] {\s For $i \neq j$}: obviously, it holds $X=\ell_i(w)=\ell_i(z)$. By using Lemma \ref{distance}, we can prove in {\s a similar way} as in \cite{li} (see part (2), Case 2 in the proof of {\s Theorem 3.1 \cite{li}}) that it holds
$$d_G(u,f) - d_G(v,f) = d_{G_i}(U, X) - d_{G_i}(V,X).$$
We can see from the above equality that $d_G(u,f) < d_G(v,f)$ holds if and only if $d_{G_i}(U,X) < d_{G_i}(V,X)$. Therefore, $f$ belongs to $M_u(e|G)$ if and only if there exists $X \in N_U(E|G_i)$ such that $f$ belongs to $E(X)$.
\end{itemize}
Combining cases (a) and (b), one can deduce the following equality:
$$M_u(e|G) = \left( \bigcup_{X \in N_{U}(E|G_i)} E(X) \right) \bigcup \left( \bigcup_{F \in M_{U}(E|G_i)} \widehat{F} \right).$$
The other equality can be shown in the same way. \qed
\end{proof}

\noindent
The following lemma will be also needed to prove the main theorem of this paper.

\begin{lemma} \label{pomoc2} {\s Suppose $e=uv \in F_i$, where $i \in \lbrace 1, \ldots, k \rbrace$, $U=\ell_i(u)$, $V=\ell_i(v)$, and  $E=UV \in E(G_i)$. Then}
$$m_u(e|(G,\lambda')) = n_U(E|(G_i,{\lambda}_i)) + m_U(E|(G_i,\lambda_i')),$$
$$m_v(e|(G,\lambda')) = n_V(E|(G_i,{\lambda}_i)) + m_V(E|(G_i,\lambda_i')).$$
\end{lemma}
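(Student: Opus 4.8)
The plan is to deduce Lemma~\ref{pomoc2} directly from the set identity of Lemma~\ref{pomoc1} by summing the edge-weight $\lambda'$ over both sides of that identity. First I would write out $m_u(e|(G,\lambda')) = \sum_{f \in M_u(e|G)} \lambda'(f)$ and substitute the decomposition
$$M_u(e|G) = \left( \bigcup_{X \in N_{U}(E|G_i)} E(X) \right) \bigcup \left( \bigcup_{F \in M_{U}(E|G_i)} \widehat{F} \right)$$
furnished by Lemma~\ref{pomoc1}.

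The key observation is that this union is in fact a \emph{disjoint} union, so the sum of $\lambda'$ over $M_u(e|G)$ splits as a sum over the sets $E(X)$ plus a sum over the sets $\widehat{F}$. Indeed, the connected components of $G \setminus F_i$ are pairwise vertex-disjoint, hence the edge sets $E(X)$ are pairwise disjoint; any two sets $\widehat{F}$ and $\widehat{F'}$ with $F \neq F'$ are disjoint, since an edge of $G$ joining two distinct components of $G \setminus F_i$ determines the unordered pair of components it connects and thus the unique edge of $G_i$ whose preimage contains it; and an edge lying in some $E(X)$ has both end vertices in the single component $X$, so it cannot belong to any $\widehat{F}$, whose members join two \emph{distinct} components. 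Consequently
$$m_u(e|(G,\lambda')) = \sum_{X \in N_{U}(E|G_i)} \sum_{f \in E(X)} \lambda'(f) \;+\; \sum_{F \in M_{U}(E|G_i)} \sum_{f \in \widehat{F}} \lambda'(f).$$

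Finally I would identify the two summands with the claimed quotient quantities. By the definition of $\lambda_i$ one has $\sum_{f \in E(X)} \lambda'(f) = \lambda_i(X)$, so the first term equals $\sum_{X \in N_{U}(E|G_i)} \lambda_i(X) = n_U(E|(G_i,\lambda_i))$; by the definition of $\lambda_i'$ one has $\sum_{f \in \widehat{F}} \lambda'(f) = \lambda_i'(F)$, so the second term equals $\sum_{F \in M_{U}(E|G_i)} \lambda_i'(F) = m_U(E|(G_i,\lambda_i'))$. This gives the first equality, and the second follows verbatim from the twin decomposition of $M_v(e|G)$ in Lemma~\ref{pomoc1}. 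I do not expect a real obstacle here: all the geometric content is already packaged in Lemma~\ref{pomoc1}, and the only point needing care is the disjointness check above, which legitimizes passing from the set identity to the weighted identity.
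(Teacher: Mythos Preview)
Your proposal is correct and follows essentially the same route as the paper: apply Lemma~\ref{pomoc1}, split the sum of $\lambda'$ over $M_u(e|G)$ accordingly, and identify the pieces with $n_U(E|(G_i,\lambda_i))$ and $m_U(E|(G_i,\lambda_i'))$ via the definitions of $\lambda_i$ and $\lambda_i'$. Your explicit verification that the union is disjoint is in fact more careful than the paper, which passes from the set identity to the double sum without comment.
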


\begin{proof} By Lemma \ref{pomoc1} we obtain
\begin{eqnarray*}
m_u(e|(G,\lambda')) & = & \sum_{f \in M_u(e|G)} \lambda'(f) \\
& = & \sum_{X \in N_{U}(E|G_i)} \left( \sum_{f \in E(X)} \lambda'(f) \right) + \sum_{F \in M_{U}(E|G_i)} \left( \sum_{f \in \widehat{F}}\lambda'(f) \right) \\
& = & \sum_{X \in N_{U}(E|G_i)} \lambda_i(X) + \sum_{F \in M_{U}(E|G_i)} \lambda_i'(F)  \\
& = & n_U(E|(G_i,{\lambda}_i)) + m_U(E|(G_i,\lambda_i')),
\end{eqnarray*}
which shows the first equality. The remaining equality can be proved in the same way. \qed
\end{proof}

\noindent
The main result of this paper reads as follows.
\begin{theorem} \label{glavni} Let $G$ be a connected graph and $\lbrace F_1,\ldots, F_k \rbrace$ a c-partition of the set $E(G)$. If $w$ is a vertex-weight and $w',\lambda'$ are edge-weights, then
\begin{eqnarray*}
Sz(G,w,w')&=&  \sum_{i=1}^k Sz(G_i,w_i,w_i'),\\
 PI_v(G,w,w') &=&  \sum_{i=1}^k PI_v(G_i,w_i,w_i'), \\
Sz_e(G,\lambda',w') & =& \sum_{i=1}^k Sz_t(G_i,\lambda_i,\lambda_i',w_i'),\\
 PI(G,\lambda',w') &=& \sum_{i=1}^k \big( PI_v(G_i,\lambda_i,w_i') + PI(G_i,\lambda_i',w_i') \big).
\end{eqnarray*}
\end{theorem}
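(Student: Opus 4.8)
The plan is to reduce everything to one bookkeeping identity: the c-partition $\{F_1,\ldots,F_k\}$ splits $E(G)$ into disjoint blocks, and each block $F_i$ is in turn the disjoint union $\bigcup_{E\in E(G_i)}\widehat{E}$. The first fact is immediate. For the second, observe that any edge of $G$ joining two distinct connected components of $G\setminus F_i$ must itself lie in $F_i$, while Lemma \ref{sosednji} guarantees that every edge $e=uv\in F_i$ joins the two \emph{distinct} components $\ell_i(u)$ and $\ell_i(v)$; hence $e\in\widehat{E}$ for the unique edge $E=\ell_i(u)\ell_i(v)\in E(G_i)$. Consequently $\sum_{e=uv\in E(G)} = \sum_{i=1}^k \sum_{E=UV\in E(G_i)} \sum_{e\in\widehat{E}}$, where for $e\in\widehat{E}$ we always fix the orientation $e=uv$ with $u\in V(U)$ and $v\in V(V)$.

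Next I would feed this decomposition into each of the four defining sums. The crucial point is that, by Lemma \ref{pomoc} and Lemma \ref{pomoc2}, for every $e=uv\in\widehat{E}$ the quantities $n_u(e|(G,w))$, $n_v(e|(G,w))$, $m_u(e|(G,\lambda'))$, $m_v(e|(G,\lambda'))$ depend only on $E$ — they equal $n_U(E|(G_i,w_i))$, $n_V(E|(G_i,w_i))$, $n_U(E|(G_i,\lambda_i))+m_U(E|(G_i,\lambda_i'))$ and $n_V(E|(G_i,\lambda_i))+m_V(E|(G_i,\lambda_i'))$ respectively — and not on the individual edge $e$. Hence the only $e$-dependent factor in each summand is $w'(e)$, which can be pulled out of the innermost sum, and $\sum_{e\in\widehat{E}} w'(e) = w_i'(E)$ by the definition of $w_i'$. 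For instance,
\begin{align*}
Sz(G,w,w') &= \sum_{i=1}^k\sum_{E=UV\in E(G_i)}\Big(\sum_{e\in\widehat{E}}w'(e)\Big)\,n_U(E|(G_i,w_i))\,n_V(E|(G_i,w_i)) \\
&= \sum_{i=1}^k\sum_{E=UV\in E(G_i)} w_i'(E)\,n_U(E|(G_i,w_i))\,n_V(E|(G_i,w_i)) = \sum_{i=1}^k Sz(G_i,w_i,w_i'),
\end{align*}
and the identity for $PI_v$ follows in exactly the same way with the product replaced by the sum $n_U+n_V$.

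For the edge versions I would substitute the expressions from Lemma \ref{pomoc2}. In the $Sz_e$ computation the summand for $E=UV$ becomes $w_i'(E)\big(n_U(E|(G_i,\lambda_i))+m_U(E|(G_i,\lambda_i'))\big)\big(n_V(E|(G_i,\lambda_i))+m_V(E|(G_i,\lambda_i'))\big)$, which is exactly the summand in the definition of $Sz_t(G_i,\lambda_i,\lambda_i',w_i')$; summing over $E$ and $i$ gives the third identity. For $PI$ the summand is $w_i'(E)\big(n_U+m_U+n_V+m_V\big)$ evaluated on $G_i$, which splits as $w_i'(E)(n_U+n_V) + w_i'(E)(m_U+m_V)$, i.e. the contribution to $PI_v(G_i,\lambda_i,w_i')$ plus the contribution to $PI(G_i,\lambda_i',w_i')$; summing yields the fourth identity.

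There is no deep obstacle here: once Lemmas \ref{pomoc} and \ref{pomoc2} are available the argument is pure reindexing. The points that need the most care are (i) verifying $F_i = \bigcup_{E\in E(G_i)}\widehat{E}$ is a disjoint union, so that the triple sum is legitimate; (ii) fixing a consistent orientation of each $e\in\widehat{E}$ compatible with the endpoints of $E$, which is harmless since every summand is symmetric in the two endpoints; and (iii) correctly recognizing the resulting expressions as $Sz_t$ in the edge-Szeged case and as $PI_v+PI$ in the PI case.
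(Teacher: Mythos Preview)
Your proposal is correct and follows essentially the same approach as the paper: decompose $E(G)$ as $\bigcup_i\bigcup_{E\in E(G_i)}\widehat{E}$, apply Lemma~\ref{pomoc} (resp.\ Lemma~\ref{pomoc2}) to make the $n$- and $m$-quantities constant on each $\widehat{E}$, then collapse $\sum_{e\in\widehat{E}}w'(e)=w_i'(E)$ and identify the resulting expressions. Your write-up is in fact slightly more careful than the paper's about justifying the disjoint-union decomposition (via Lemma~\ref{sosednji}) and the orientation convention, but the argument is the same.
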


\begin{proof}
Since $E(G) = \displaystyle\bigcup_{i=1}^k F_i$ and for any $i \in \lbrace 1,\ldots, k \rbrace$ it holds $$F_i= \bigcup_{E \in E(G_i)} \widehat{E},$$
we obtain
\begin{eqnarray*}
Sz(G,w,w') & = & \sum_{e=uv \in E(G)} w'(e) n_u(e|(G,w)) n_v(e|(G,w)) \\
& = & \sum_{i=1}^k \left( \sum_{e =uv \in F_i} w'(e)n_u(e|(G,w)) n_v(e|(G,w)) \right) \\
  & = & \sum_{i=1}^k \left( \sum_{E=UV \in E(G_i)} \left[ \sum_{e =uv \in \widehat{E}} w'(e) n_u(e|(G,w))n_v(e|(G,w))  \right] \right).
\end{eqnarray*}
\noindent
By Lemma \ref{pomoc} one can calculate
\begin{eqnarray*}
Sz(G,w,w') & = & \sum_{i=1}^k \left( \sum_{E=UV \in E(G_i)} \left[ \sum_{e =uv \in \widehat{E}} w'(e) n_U(E|(G_i,w_i)) n_V(E|(G_i,w_i)) \right] \right) \\
& = & \sum_{i=1}^k \left( \sum_{E=UV \in E(G_i)} n_U(E|(G_i,w_i)) n_V(E|(G_i,w_i)) \left[ \sum_{e  \in \widehat{E}} w'(e)  \right] \right) \\
& = & \sum_{i=1}^k \left( \sum_{E=UV \in E(G_i)} w_i'(E)  n_U(E|(G_i,w_i))  n_V(E|(G_i,w_i))  \right) \\
& = & \sum_{i=1}^k Sz(G_i,w_i,w_i'),
\end{eqnarray*}
which completes the first part of the proof. The proof for the vertex-PI index is almost the same. Analogously, for the edge-Szeged index we compute
\begin{eqnarray*}
Sz_e(G,\lambda',w') & = & \sum_{e=uv \in E(G)} w'(e) m_u(e|(G,\lambda')) m_v(e|(G,\lambda')) \\
& = & \sum_{i=1}^k \left( \sum_{e =uv \in F_i} w'(e)m_u(e|(G,\lambda')) m_v(e|(G,\lambda')) \right) \\
  & = & \sum_{i=1}^k \left( \sum_{E=UV \in E(G_i)} \left[ \sum_{e =uv \in \widehat{E}} w'(e) m_u(e|(G,\lambda'))m_v(e|(G,\lambda'))  \right] \right).
\end{eqnarray*}
By using Lemma \ref{pomoc2} and similar calculations as above, we now get 
\begin{eqnarray*}
Sz_e(G,\lambda',w')= \sum_{i=1}^k Sz_t(G_i,\lambda_i,\lambda_i',w_i'),
\end{eqnarray*}
which finishes the proof for the edge-Szeged index. The calculation for the PI index can be done in a similar way, the only difference being that the sum should be partitioned into the two sums. \qed
\end{proof}

\noindent
The above result gives a method for computing the weighted (edge-)Szeged index and the weighted (vertex-)PI index of any connected graph.

\begin{theorem} \label{metoda_sz}
If $G$ is a connected graph and $\lbrace F_1,\ldots, F_k \rbrace$ is a c-partition of the set $E(G)$, then
\begin{align*}
wSz(G)&=  \sum_{i=1}^k Sz(G_i,w_i,w_i'), & wPI_v(G) &=  \sum_{i=1}^k PI_v(G_i,w_i,w_i'), \\
wSz_e(G) & = \sum_{i=1}^k Sz_t(G_i,\lambda_i,\lambda_i',w_i'), & wPI(G) &= \sum_{i=1}^k \big( PI_v(G_i,\lambda_i,w_i') + PI(G_i,\lambda_i',w_i') \big),
\end{align*}
where $w_i,\lambda_i: V(G_i) \rightarrow \mathbb{R}_0^+$ and $w_i',\lambda_i': E(G_i) \rightarrow \mathbb{R}_0^+$ are defined as follows: 
\begin{itemize}
\item $w_i(X)$ is the number of vertices in a connected component $X$ of $G \setminus F_i$,
\item $\lambda_i(X)$ is the number of edges in a connected component $X$ of $G \setminus F_i$,
\item {\s $w_i'(E) = \displaystyle \sum_{xy \in \widehat{E}}({\deg}(x) + {\deg}(y))$} for any $E \in E(G_i)$,
\item $\lambda_i'(E) = |\widehat{E} |$ for any $E \in E(G_i)$. With other words, if $E=XY$, then $\lambda_i'(E)$ is the number of edges between connected components $X$ and $Y$.
\end{itemize}
\end{theorem}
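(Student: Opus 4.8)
The plan is to obtain Theorem \ref{metoda_sz} as a direct specialization of Theorem \ref{glavni}. On $G$ I would fix the vertex-weight $w \equiv 1$ together with the edge-weights $\lambda' \equiv 1$ and $w'(e) = \deg(u) + \deg(v)$ for each edge $e = uv \in E(G)$. The first step is to check that for this choice the weighted quantities collapse to the plain counting functions: since $w(x) = 1$ for every $x$, we get $n_u(e|(G,w)) = |N_u(e|G)| = n_u(e)$ and similarly $n_v(e|(G,w)) = n_v(e)$; since $\lambda'(f) = 1$ for every $f$, we get $m_u(e|(G,\lambda')) = |M_u(e|G)| = m_u(e)$ and $m_v(e|(G,\lambda')) = m_v(e)$. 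Substituting these identities, together with $w'(e) = \deg(u) + \deg(v)$, into the definitions of $Sz(G,w,w')$, $PI_v(G,w,w')$, $Sz_e(G,\lambda',w')$ and $PI(G,\lambda',w')$ identifies the four quantities with $wSz(G)$, $wPI_v(G)$, $wSz_e(G)$ and $wPI(G)$, respectively.

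The second step is to unwind the induced quotient-graph weights $w_i, \lambda_i, w_i', \lambda_i'$ attached to Theorem \ref{glavni} for this $w, \lambda', w'$. From $w \equiv 1$ one gets $w_i(X) = \sum_{x \in V(X)} 1 = |V(X)|$, i.e.\ the number of vertices of the component $X$ of $G \setminus F_i$; from $\lambda' \equiv 1$ one gets $\lambda_i(X) = \sum_{e \in E(X)} 1 = |E(X)|$ and $\lambda_i'(E) = \sum_{e \in \widehat{E}} 1 = |\widehat{E}|$; and $w_i'(E) = \sum_{e \in \widehat{E}} w'(e) = \sum_{xy \in \widehat{E}} (\deg(x) + \deg(y))$. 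These are exactly the weights listed in the statement, so applying Theorem \ref{glavni} to $(G, w, w', \lambda')$ yields the four displayed equalities verbatim.

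There is no genuine obstacle here: once Theorem \ref{glavni} is available, the argument is pure bookkeeping. The only points requiring a moment's care are that the degrees appearing in $w'$, and hence in $w_i'$, are taken in $G$ and not in any quotient graph, and that the degenerate case $M_u(e|G) = \emptyset$ is already covered by the convention $m_u(e|(G,\lambda')) = 0$ fixed in the preliminaries, so it needs no separate discussion.
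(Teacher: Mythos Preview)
Your proposal is correct and follows essentially the same approach as the paper: both set $w\equiv 1$, $\lambda'\equiv 1$, and $w'(uv)=\deg(u)+\deg(v)$, observe that the weighted indices of $(G,w,w')$ and $(G,\lambda',w')$ then coincide with $wSz(G)$, $wPI_v(G)$, $wSz_e(G)$, $wPI(G)$, and invoke Theorem~\ref{glavni}. You have simply spelled out in more detail the verification that the induced quotient weights $w_i,\lambda_i,w_i',\lambda_i'$ match those in the statement, which the paper leaves implicit.
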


\begin{proof}
For a graph $G$, we introduce the weights $w: V(G) \rightarrow \mathbb{R}_0^+$, $w',\lambda': E(G) \rightarrow \mathbb{R}_0^+$ as follows: $w(x) = 1$ for any $x \in V(G)$ and $\lambda'(e)=1$ for each $e \in E(G)$. Moreover, for any $e=xy \in E(G)$ we set $w'(e)=\textrm{deg}(x) + \textrm{deg}(y)$. Obviously, it holds that $wSz(G)=Sz(G,w,w')$, $wPI_v(G)=PI_v(G,w,w')$, $wSz_e(G)=Sz_e(G,\lambda',w')$, and $wPI(G)=PI(G,\lambda',w')$. The result now follows by Theorem \ref{glavni}. \qed
\end{proof}
\begin{remark}
If $G$ is a bipartite graph, it obviously holds $n_u(e) + n_v(e) = |V(G)|$ for any edge $e=uv$ in $G$. Therefore, in such a case the weighted vertex-PI index can be also computed as {\s $wPI_v(G) = |V(G)| \displaystyle \sum_{e =uv \in E(G)} (deg(u) + deg(v)) = |V(G)| \cdot M_1(G)$}, where $M_1(G)$ denotes the well known first Zagreb index \cite{dos}.
\end{remark}

In the rest of the section, we apply Theorem \ref{metoda_sz} to a fullerene patch $G$, see Figure \ref{fullerene} ($G$ is actually a subgraph of the well known buckminsterfullerene). The same figure also shows the $\Theta^*$-classes of $G$, which are denoted by $M_1,M_2,M_3,M_4,M_5,M_6$. We can easily see that relation $\Theta$ is not transitive and therefore, $G$ is not a partial cube. Note that the revised edge-Szeged index and the Mostar index were computed for this graph in \cite{li} and  \cite{tratnik_mostar}, respectively.

\begin{figure}[h!] 
\begin{center}
\includegraphics[scale=0.7,trim=0cm 0cm 0cm 0cm]{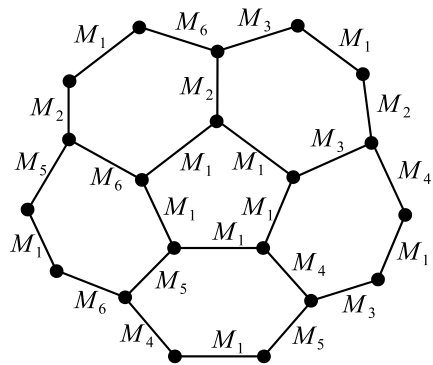}
\end{center}
\caption{\label{fullerene} Graph $G$ with its $\Theta^*$-classes \cite{tratnik_mostar}.}
\end{figure}

As in \cite{tratnik_mostar}, we define $F_1=M_1$ and $F_2 = M_2 \cup M_3 \cup M_4 \cup M_5 \cup M_6$. Obviously, $\lbrace F_1,F_2 \rbrace$ is a c-partition of $E(G)$. Next, the quotient graphs $G_1$ and $G_2$ together with the corresponding weights are depicted in Figure \ref{kvoc1} (the weights are calculated in terms of Theorem \ref{metoda_sz}). 
\begin{figure}[h!] 
\begin{center}
\includegraphics[scale=0.7,trim=0cm 0cm 0cm 0cm]{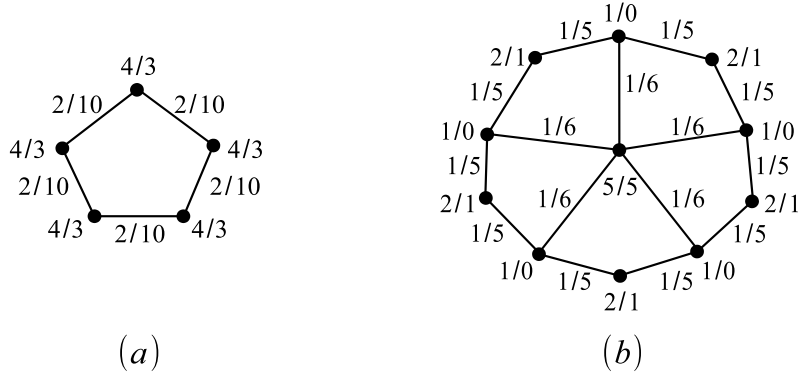}
\end{center}
\caption{\label{kvoc1} Quotient graphs (a) $G_1$ and (b) $G_2$ with vertex-weights $w_i/\lambda_i$ and edge-weights $\lambda_i'/w_i'$, $i \in \lbrace 1,2 \rbrace$.}
\end{figure}
Hence, by Theorem \ref{metoda_sz} the weighted Szeged index can be computed as follows:
\begin{eqnarray*}
wSz(G) & = & Sz(G_1,w_1,w_1') + Sz(G_2,w_2,w_2') \\
& = & \big( 5 \cdot 10 \cdot 8 \cdot 8 \big) + \big( 5 \cdot 6 \cdot 15 \cdot 5 + 10 \cdot 5 \cdot 15 \cdot 5 \big)   \\
& = & 3200 + 6000 = 9200.
\end{eqnarray*}
\noindent
Similar calculations give the other three indices:
\begin{eqnarray*}
wPI_v(G) & = & PI_v(G_1,w_1,w_1') + PI_v(G_2,w_2,w_2') \\
& = & \big( 5 \cdot 10 \cdot (8 + 8) \big) + \big(5 \cdot 6 \cdot (15 + 5) + 10 \cdot 5 \cdot (15 + 5)\big)   \\
& = & 800 + 1600 = 2400,\\
wSz_e(G) & = & Sz_t(G_1,\lambda_1,\lambda_1',w_1') + Sz_t(G_2,\lambda_2,\lambda_2',w_2') \\
& = & \big( 5 \cdot 10 \cdot (6+4) \cdot (6+4) \big) \\
&+ & \big( 5 \cdot 6 \cdot (8+10) \cdot (2+2) + 10 \cdot 5 \cdot (2+2) \cdot (8+10) \big) \\
& = & 5000 + 5760 = 10760,\\
wPI(G) & = & PI_v(G_1,\lambda_1,w_1') + PI(G_1,\lambda_1',w_1') + PI_v(G_2,\lambda_2,w_2') + PI(G_2,\lambda_2',w_2')\\
& = & \big(5 \cdot 10 \cdot (6+6) \big)  + \big( 5 \cdot 10 \cdot (4+4) \big) \\
& + & \big( 5 \cdot 6 \cdot (8+2) + 10 \cdot 5 \cdot (8+2) \big) + \big( 5 \cdot 6 \cdot (10+2) + 10 \cdot 5 \cdot (10+2) \big)\\
& = & 600 + 400 + 800 + 960 = 2760.
\end{eqnarray*}

\section{Benzenoid systems}

In this section, we apply Theorem \ref{metoda_sz} to benzenoid systems, which represent important molecular graphs \cite{gut_knjiga}. More precisely, the procedure for an efficient calculation of the weighted (edge-)Szeged index and the weighted (vertex-)PI index is described. It enables us to compute these indices in sub-linear time with respect to the number of vertices of a given benzenoid system. Note that analogous methods are already known for some other distance-based topological indices, see \cite{CK-1998,cre-trat1}.

\noindent
Let ${\cal H}$ be the hexagonal (graphite) lattice and let $Z$ be a cycle on it. A {\em benzenoid system} is the graph induced by vertices and edges of ${\cal H}$, lying on $Z$ and in its interior. For an example of a benzenoid system, see Figure \ref{ben_primer}. In addition, by $|Z|$ we denote the number of vertices in $Z$.

\begin{figure}[h!] 
\begin{center}
\includegraphics[scale=0.9,trim=0cm 0.4cm 0cm 0cm]{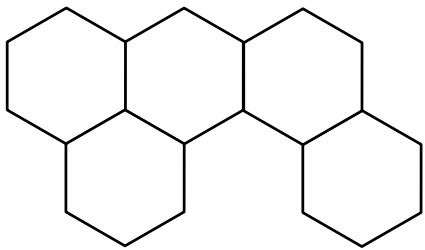}
\end{center}
\caption{\label{ben_primer} Benzenoid system $G$.}
\end{figure}

An \textit{elementary cut} of a benzenoid system $G$ is a line segment that starts at the center of a peripheral edge of a benzenoid system,
goes orthogonal to it and ends at the first next peripheral
edge of $G$. The main insight for our consideration
is that every $\Theta$-class of a benzenoid system
$G$ coincides with exactly one of its elementary cuts. Therefore, we can easily see by Theorem \ref{th:partial-k} that benzenoid systems are partial cubes \cite{klavzar-book}. 

The edge set of a benzenoid system $G$ can be naturally partitioned into sets $F_1, F_2$, and $F_3$ of edges of the same direction. Obviously, the partition $\lbrace F_1,F_2,F_3 \rbrace$ is a c-partition of the set $E(G)$. For $i \in \lbrace 1, 2, 3 \rbrace$, the quotient graph $G_i=G/F_i$ will be denoted as $T_i$. It is well known that $T_1$, $T_2$, and $T_3$ are trees \cite{chepoi-1996}. Moreover, we define weights $w_i$, $w_i'$, $\lambda_i$, $\lambda_i'$ on $T_i$, $i \in \lbrace 1,2,3 \rbrace$, as in Theorem \ref{metoda_sz}. If $G$ is a benzenoid system from Figure \ref{ben_primer}, then let $F_1$ be the set of all the vertical edges of $G$. Therefore, the edges from $F_1$ correspond to horizontal elementary cuts of $G$, see Figure \ref{quotient_tree} (a). The corresponding weighted quotient tree is shown in Figure \ref{quotient_tree} (b).

\begin{figure}[h!] 
\begin{center}
\includegraphics[scale=0.9,trim=0cm 0.4cm 0cm 0cm]{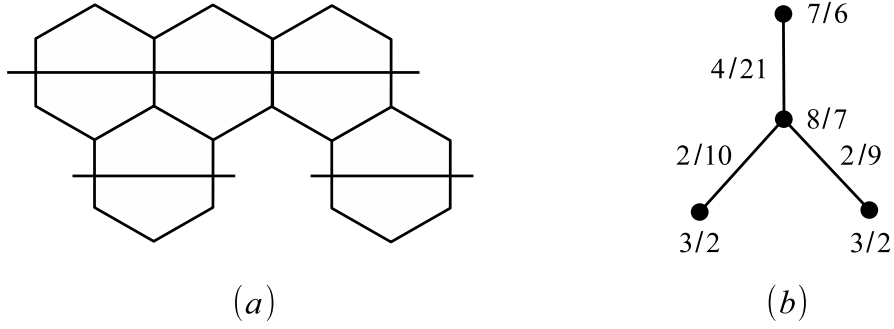}
\end{center}
\caption{\label{quotient_tree} (a) Graph $G$ with horizontal elementary cuts and (b) the corresponding quotient tree $T_1$ with vertex-weights $w_1/ \lambda_1$ and edge-weights $\lambda_1' / w_1'$.}
\end{figure}

\noindent
The next proposition is a direct consequence of Theorem \ref{metoda_sz}.
\begin{proposition} \label{metoda_ben}
If $G$ is a benzenoid system, then
\begin{align*}
wSz(G)&=  \sum_{i=1}^3 Sz(T_i,w_i,w_i'), & wPI_v(G) &=  \sum_{i=1}^3 PI_v(T_i,w_i,w_i'), \\
wSz_e(G) & = \sum_{i=1}^3 Sz_t(T_i,\lambda_i,\lambda_i',w_i'), & wPI(G) &= \sum_{i=1}^3 \big( PI_v(T_i,\lambda_i,w_i') + PI(T_i,\lambda_i',w_i') \big),
\end{align*}
where the trees $T_i$, $i \in \lbrace 1,2,3 \rbrace$, are defined as above and the weights are defined as in Theorem \ref{metoda_sz}.
\end{proposition}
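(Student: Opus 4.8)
The plan is to show that Proposition~\ref{metoda_ben} is nothing more than the specialization of Theorem~\ref{metoda_sz} to benzenoid systems, so the entire argument reduces to verifying that the hypotheses of Theorem~\ref{metoda_sz} are met with $k=3$ and $\{F_1,F_2,F_3\}$ the partition of $E(G)$ into the three edge directions. First I would recall that every benzenoid system $G$ is a partial cube, since its $\Theta$-classes are precisely its elementary cuts and hence $\Theta=\Theta^*$ by Theorem~\ref{th:partial-k}; this was already noted in the section. Consequently the $\Theta^*$-partition of $E(G)$ is exactly the partition into elementary cuts.

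Next I would observe that each set $F_i$ of edges of a fixed direction is a union of elementary cuts: two parallel elementary cuts never share an edge, and an elementary cut consists entirely of edges of one direction (it runs orthogonal to the peripheral edge at which it starts). Therefore $F_i$ is a union of $\Theta^*$-classes, which means $\{F_1,F_2,F_3\}$ is a c-partition of $E(G)$ in the sense defined in the Preliminaries. With this established, Theorem~\ref{metoda_sz} applies verbatim with $k=3$, the quotient graphs $G_i=G/F_i$ (here renamed $T_i$), and the weights $w_i,\lambda_i,w_i',\lambda_i'$ defined exactly as in the statement of Theorem~\ref{metoda_sz}. That theorem then yields the four displayed identities, and since the weights in Proposition~\ref{metoda_ben} are declared to be those of Theorem~\ref{metoda_sz}, there is nothing further to compute.

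The only genuine content beyond citing Theorem~\ref{metoda_sz} is the fact that each $T_i$ is a tree, which is needed so that the statement is meaningful as phrased (and so that the subsequent sub-linear-time algorithm works); this is classical and I would simply cite \cite{chepoi-1996} as is already done in the surrounding text, or sketch it briefly: contracting all edges of one direction in a benzenoid system collapses each ``row'' of hexagons in that direction, and the resulting graph inherits bipartiteness and acyclicity because any cycle in $G_i$ would lift to a cycle in $G$ using edges from the other two directions only, which is impossible in the hexagonal lattice. The main (and really the sole) obstacle is the bookkeeping of the previous sentence, namely confirming that $F_i$ is genuinely a union of $\Theta^*$-classes; once that is granted, the proposition is an immediate corollary and the proof is a single line invoking Theorem~\ref{metoda_sz}.

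\begin{proof}
As noted above, every benzenoid system $G$ is a partial cube whose $\Theta^*$-classes are exactly its elementary cuts. Each elementary cut consists of edges of a single direction, and distinct parallel cuts are edge-disjoint; hence each of the sets $F_1,F_2,F_3$ of edges of the same direction is a union of $\Theta^*$-classes, so $\{F_1,F_2,F_3\}$ is a c-partition of $E(G)$. Since the trees $T_i=G/F_i$ and the weights $w_i,\lambda_i,w_i',\lambda_i'$ are precisely those appearing in Theorem~\ref{metoda_sz}, the four identities follow at once from that theorem applied with $k=3$. \qed
\end{proof}
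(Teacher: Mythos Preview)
Your proposal is correct and takes essentially the same approach as the paper: the paper states that the proposition is a direct consequence of Theorem~\ref{metoda_sz}, having already recorded in the preceding paragraph that benzenoid systems are partial cubes, that $\Theta$-classes coincide with elementary cuts, and that $\{F_1,F_2,F_3\}$ is a c-partition. Your write-up simply makes these verifications explicit before invoking Theorem~\ref{metoda_sz} with $k=3$, which is exactly the intended one-line argument.
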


\noindent
The following lemma will be also needed.
\begin{lemma} 
\label{lema_tree}
If $T$ is a tree with $n$ vertices, vertex-weights $w,\lambda$, and edge-weights $w',\lambda'$, then the indices $Sz(T,w,w')$, $PI_v(T,w,w')$, $Sz_t(T,\lambda,\lambda',w')$, and $PI(T,\lambda',w')$ can be computed in $O(n)$ time.
\end{lemma}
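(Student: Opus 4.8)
The plan is to exploit the fact that a tree on $n$ vertices has exactly $n-1$ edges, and that removing an edge $e=UV$ from a tree disconnects it into exactly two subtrees, one containing $U$ and one containing $V$. Consequently, for any edge $e=UV$ of $T$ we have $N_U(e|T) = V(T_U)$ and $N_V(e|T) = V(T_V)$, where $T_U, T_V$ are the two components of $T \setminus e$; similarly $M_U(e|T) = E(T_U)$ and $M_V(e|T) = E(T_V)$, so that $m_U(e|(T,\lambda')) + m_V(e|(T,\lambda')) = \lambda'(E(T)) - \lambda'(e)$ is a quantity involving the total edge-weight. Thus each of the four indices reduces to a sum over the $n-1$ edges of a product (or sum) of two ``subtree weight totals''. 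The key quantities to precompute are, for each edge $e$ and each of its two sides, the sum of $w$ over the vertices on that side, the sum of $\lambda$ over the vertices on that side, the sum of $w'$ over the edges on that side, and the sum of $\lambda'$ over the edges on that side.

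First I would root $T$ at an arbitrary vertex $r$ and compute, by a single depth-first traversal (postorder), for every vertex $x$ the four ``downward'' subtree totals: $W(x) = \sum_{y \in T_x} w(y)$, and analogously $\Lambda(x)$ over $\sum \lambda(y)$, $W'(x) = \sum_{f \in E(T_x)} w'(f)$, $\Lambda'(x) = \sum_{f \in E(T_x)} \lambda'(f)$, where $T_x$ is the subtree rooted at $x$. Each of these is obtained in $O(1)$ work per vertex from the values at its children (adding the weight of $x$ itself, and for the edge-sums adding the weights of the edges from $x$ to its children). This traversal costs $O(n)$ total. Then, for an edge $e$ joining a vertex $x$ to its parent, the ``below'' side has vertex-weight total $W(x)$, vertex-$\lambda$ total $\Lambda(x)$, edge-$w'$ total $W'(x)$, edge-$\lambda'$ total $\Lambda'(x)$; the ``above'' side totals are obtained by subtraction from the global totals (and, for the edge sums, also subtracting $w'(e)$ or $\lambda'(e)$ respectively, since $e$ itself lies in neither $T_x$ nor its complement). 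Having these, I would iterate once more over the $n-1$ edges, and for edge $e=UV$ accumulate $w'(e) \cdot n_U \cdot n_V$ for $Sz$, $w'(e)(n_U + n_V)$ for $PI_v$, $w'(e)\bigl(n_U + m_U\bigr)\bigl(n_V + m_V\bigr)$ for $Sz_t$ (using $n_U = n_U(e|(T,\lambda))$, $m_U = m_U(e|(T,\lambda'))$ as computed above), and $w'(e)(m_U + m_V)$ for $PI$ (plus the separate $PI_v(T,\lambda,w')$ piece, handled identically with $\lambda$ in place of $w$). This second pass is again $O(1)$ per edge, hence $O(n)$ overall.

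There is no real obstacle here; the only point requiring a little care is bookkeeping the edge-sums, namely remembering that the edge $e$ being deleted belongs to neither side, so the ``above'' edge-total is $(\text{global edge total}) - W'(x) - w'(e)$ rather than $(\text{global}) - W'(x)$; getting this wrong would produce an off-by-$w'(e)$ error in the edge-Szeged and PI computations. One should also observe at the outset that $N_U(e|T) \cup N_V(e|T) = V(T)$ and $M_U(e|T) \cup M_V(e|T) = E(T) \setminus \{e\}$, which justifies that these two-sided totals capture everything. With these observations the four loops are all linear, so the combined running time is $O(n)$. $\square$
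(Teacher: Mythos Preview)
Your proposal is correct and matches the approach the paper has in mind: the paper's own proof is essentially a pointer, stating only that ``the proof is based on the standard BFS (breadth-first search) algorithm'' and referring to analogous arguments in \cite{cre-trat,tratnik} before skipping the details. Your write-up supplies exactly those details---a single traversal to accumulate subtree totals for each of the four weights, followed by one pass over the $n-1$ edges using global-minus-subtree to get the complementary side---which is the standard linear-time scheme the paper alludes to. The only cosmetic difference is that you phrase the traversal as a postorder DFS while the paper names BFS; for computing subtree sums on a rooted tree these are interchangeable, so there is no substantive divergence. One small aside: your parenthetical about ``the separate $PI_v(T,\lambda,w')$ piece'' is not actually part of the present lemma's statement (it appears later in the application of Theorem~\ref{metoda_sz}), but since the same method handles it this does no harm.
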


\begin{proof}
The proof is based on the standard BFS (breadth-first search) algorithm and it is similar to the proof of Proposition 4.4 in \cite{cre-trat} or to the proof of Lemma 4.1 in \cite{tratnik}. Therefore, we skip the details. \qed
\end{proof}

In \cite{chepoi-1996} it was shown that each weighted quotient tree $T_i$, $i \in \lbrace 1,2,3 \rbrace$, can be computed in linear time with respect to the number of vertices in a benzenoid system. Therefore, by Lemma \ref{lema_tree} and Proposition \ref{metoda_ben}, the weighted (edge-)Szeged and weighted (vertex-)PI indices can be computed in linear time. However, the mentioned indices can be computed even faster, i.e.\ in sub-linear time. To show this, we need the following lemma.

\begin{lemma} \label{tree_sub}
Let $G$ be a benzenoid system with a boundary cycle $Z$. If $i \in \lbrace 1,2,3 \rbrace$, then the tree $T_i$ and the corresponding weights $w_i$, $w_i'$, $\lambda_i$, $\lambda_i'$ can be computed in $O(|Z|)$ time.
\end{lemma}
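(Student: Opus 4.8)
The plan is to show that the whole structure of $T_i$ together with all four weights $w_i, w_i', \lambda_i, \lambda_i'$ is determined by data that can be read off the boundary cycle $Z$ alone, without ever traversing the interior of $G$. First I would recall that the edges of $F_i$ correspond exactly to the elementary cuts of $G$ in one of the three directions, and that each such elementary cut meets $Z$ in precisely two peripheral edges. Walking once around $Z$ in $O(|Z|)$ time, I would record, for each of these two boundary edges, which elementary cut it belongs to; this pairs up the boundary edges into $|E(T_i)|$ cuts, each cut being a vertex-pair $\{a,b\}$ of positions on $Z$. The vertices of $T_i$ are the connected components of $G\setminus F_i$, and since removing a direction-$i$ cut separates $Z$ into two arcs, the component structure — hence the tree $T_i$ and its adjacencies — is recoverable from how the $|E(T_i)|$ chords $\{a,b\}$ subdivide the cycle $Z$. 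Concretely, two consecutive boundary edges of $Z$ lie in the same component of $G\setminus F_i$ unless the edge between them is itself cut; processing $Z$ once and using a stack (the chords are nested, since $G$ is a benzenoid system and cuts do not cross) yields $T_i$ and the function $\ell_i$ restricted to $Z$ in $O(|Z|)$ time.

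Next I would compute the weights. For the edge-weights this is immediate: if $E = XY \in E(T_i)$ corresponds to an elementary cut $C$, then $\widehat{E}$ is exactly the set of edges of $G$ crossed by $C$, and the number of such edges equals the number of hexagons the line segment $C$ passes through plus one; but this length is determined by the two boundary edges of $C$ and the local geometry at $Z$, so $\lambda_i'(E) = |\widehat{E}|$ is obtained in constant time per cut, and $w_i'(E) = \sum_{xy \in \widehat{E}}(\deg(x)+\deg(y))$ can likewise be expressed via $|\widehat{E}|$ and the (at most four) non-degree-three contributions at the two ends of the cut, again in $O(1)$ per cut. For the vertex-weights $w_i(X) = |V(X)|$ and $\lambda_i(X) = |E(X)|$ I would use a counting argument: the total vertex count of $G$ and its total edge count are computable from $|Z|$ and the number of internal vertices, which for a benzenoid system is itself a function of $|Z|$ and the number of hexagons; and the number of hexagons, together with their arrangement along each line of cuts in direction $i$, is recoverable from the sequence of cut-lengths found above. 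One then distributes these totals among the components of $G\setminus F_i$: since each component $X$ is a "strip" of consecutive hexagons along direction $i$, its vertex and edge counts are linear functions of the number of hexagons it contains and of whether it touches the boundary in particular ways — all of which is encoded in the chord data on $Z$. Summing telescopically around $Z$ gives all $w_i(X), \lambda_i(X)$ simultaneously in $O(|Z|)$.

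I would expect the main obstacle to be the bookkeeping that converts "chord/arc structure on $Z$" into the exact vertex- and edge-counts of the interior components, i.e.\ proving that $w_i(X)$ and $\lambda_i(X)$ really are determined by boundary data and giving the explicit accounting. The cleanest way to handle this is probably to observe that along a fixed direction $i$ the hexagons of $G$ are organized into rows (maximal strips orthogonal to direction $i$), that the components of $G\setminus F_i$ are obtained by further cutting these rows, and that within a row all the relevant quantities grow by a fixed increment per hexagon; thus one only needs, per row, the number of hexagons and the positions where the row is subdivided, and both are visible on $Z$. Once this structural claim is in place, each of $T_i$, $w_i$, $w_i'$, $\lambda_i$, $\lambda_i'$ is assembled by a constant number of passes over the $O(|Z|)$ boundary edges, so the total time is $O(|Z|)$, as required. \qed
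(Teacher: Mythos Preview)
Your overall strategy --- recover $T_i$ and all four weights from the boundary cycle alone, using that parallel elementary cuts yield non-crossing chords on $Z$ so a single stack-based pass suffices --- is sound and genuinely different from the paper's route. The paper does not argue via nested chords at all; instead it invokes Chazelle's linear-time polygon-triangulation algorithm as a black box to produce a trapezoidal subdivision $\mathcal{D}_i$ of the region bounded by $Z$ (using straight cut segments joining \emph{vertices} of $Z$, not edge-midpoints), builds from it an auxiliary tree $\Gamma_i$ with ``thick'' and ``thin'' edges, equips $\Gamma_i$ with three explicit weights $a,b,c$, and finally contracts the thin edges so that $a,b,c$ collapse exactly to $\lambda_i',\,w_i,\,w_i'$ (with $\lambda_i$ obtained by summing $a$ over the contracted thin edges). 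Your approach trades this computational-geometry machinery for an elementary parenthesis-matching argument, which is a real simplification; the paper's approach in return makes every weight computation completely mechanical via the intermediate tree $\Gamma_i$.

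There is, however, a concrete gap in your handling of the vertex-weights $w_i$ and $\lambda_i$. You assert that ``each component $X$ is a strip of consecutive hexagons along direction $i$'' and that its vertex and edge counts are linear in ``the number of hexagons it contains.'' This is not right: removing all direction-$i$ edges from the hexagonal lattice leaves a disjoint union of bi-infinite zigzag \emph{paths}, so each connected component $X$ of $G\setminus F_i$ is a finite simple path and contains no hexagons whatsoever. Thus $w_i(X)=|V(X)|$ and $\lambda_i(X)=|E(X)|=w_i(X)-1$ are path lengths, not hexagon counts, and your ``rows of hexagons'' accounting does not compute them. The salvage is easy once you see this: both endpoints of the path $X$ are degree-$2$ vertices of $G$, hence lie on $Z$; they can be located from the chord/arc data you already have, and the path length is then read off from their lattice coordinates in $O(1)$ per component. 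With that correction your argument goes through in $O(|Z|)$, but as written the vertex-weight step rests on a mistaken picture of what the components are --- precisely the step the paper's $\Gamma_i$ construction is designed to make rigorous.
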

\begin{proof}
The proof is similar to the proof of Lemma 3.1 in \cite{cre-trat1}. However, some additional insights are needed and therefore, we include the whole proof. The main idea is based on Chazelle's algorithm \cite{chazelle} for computing all vertex-edge visible pairs of edges of a simple (finite) polygon in linear time. Let ${\cal D}$ be the region in the plane bounded by cycle $Z$ such that $Z$ is included in $\cal{D}$. We define a \textit{cut segment} as a straight line segment lying completely in ${\cal D}$ and connecting two distinct vertices of $Z$. A \textit{cut segment of type $i$} is a cut segment perpendicular to the edges from the set $F_i$. We divide ${\cal D}$ into strips by all the cut segments of type $i$. Note that some strips can be triangles. Such a subdivision of ${\cal D}$ will be denoted by ${\cal D}_i$, see Figure \ref{tree2} (a). Moreover, any strip of ${\cal D}_i$ can take two values: $1$ and $\frac{1}{2}$. The strip takes value $1$ if it is a rectangle and value $\frac{1}{2}$ otherwise. By the algorithm of Chazelle, the subdivision ${\cal D}_i$ can be obtained in linear time. 
	

	Let ${\cal C}_i$ be the set of all cut segments of type $i$. Furthermore, let ${\cal C}_i'$ be the set of all vertices of $Z$ that are not on any cut segment of type $i$. Now we define a new graph $\Gamma_i$ whose vertices are the elements in the set ${\cal C}_i \cup {\cal C}_i'$ and two vertices of $\Gamma_i$ are adjacent if and only if the corresponding elements belong to a common strip of ${\cal D}_i$. From the definition of $\Gamma_i$ it follows that it is a tree. Note that $\Gamma_i$ can be derived from ${\cal D}_i$ in linear time. Moreover, an edge of $\Gamma_i$ is called {\em thick} if it is defined by a strip with value $1$ and {\em thin} otherwise. Every vertex from ${\cal C}_i$ is incident to exactly one thick edge, all remaining vertices of $\Gamma_i$ being incident only to thin edges, see Figure \ref{tree2} (b). We now introduce some weights on vertices and edges of the graph $\Gamma_i$, $i \in \lbrace 1,2,3 \rbrace$, see Figure \ref{tree2} (c).

\begin{itemize}
\item [$(i)$] \textit{Weight $a$ on the edges of $\Gamma_i$.} Let $f$ be an edge of $\Gamma_i$ and let ${\cal F}$ be the strip of ${\cal D}_i$ that corresponds to $f$. Edge $f$ gets a weight $a(f)$ that is equal to the number of edges in $G$ that lie completely in ${\cal F}$. Note that this weight can be computed by using lengths of corresponding cut segments.

\item [$(ii)$]  \textit{Weight $b$ on the vertices of $\Gamma_i$.} Any element of ${\cal C}_i'$ gets weight $1$. Moreover, let $x$ be an element of ${\cal C}_i$ and let $f$ be the thick edge incident to $x$. We now define $b(x)=a(f)$. Obviously, the weight of any vertex in ${\cal C}_i$ represents the number of vertices of $G$ lying on the corresponding cut segment.

\item [$(iii)$] \textit{Weight $c$ on the thick edges of $\Gamma_i$.} Let $f$ be a thick edge of $\Gamma_i$ and let ${\cal F}$ be the rectangular strip of ${\cal D}_i$ that corresponds to $f$. If $\widehat{\mathcal{F}}$ denotes the set of all the edges of $G$ that lie completely in ${\cal F}$, then we define {\s $c(f)= \displaystyle \sum_{e=uv \in \widehat{\mathcal{F}}} (\textrm{deg}(u) + \textrm{deg}(v))$}. Note that this can be computed in $O(|Z|)$ time since the vertices of degree 2 lie only on the boundary cycle $Z$ and all the other vertices of $G$ have degree 3.
\end{itemize}

\begin{figure}[h!] 
\begin{center}
\includegraphics[scale=0.9,trim=0cm 0.4cm 0cm 0cm]{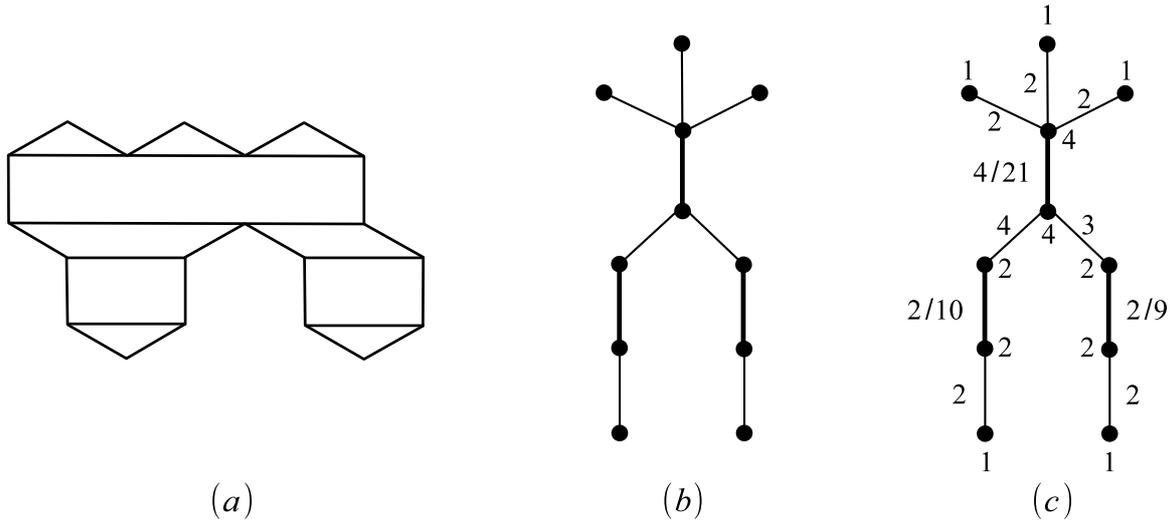}
\end{center}
\caption{\label{tree2} (a) Subdivision $\mathcal{D}_1$ for graph $G$, (b) the corresponding tree $\Gamma_1$, (c) the weight $b$ on vertices, the weight $a$ on thin edges, and weights $a/c$ on thick edges of $\Gamma_1$.}
\end{figure}

Finally, for any $i \in \lbrace 1,2,3 \rbrace$ we contract all the thin edges of $\Gamma_i$ to obtain the quotient tree $T_i$. However, the thick edges remain unchanged. Obviously, the weight $a$ on the thick edges of $\Gamma_i$ becomes the weight $\lambda_i'$. On the other hand, the weight $\lambda_i(u)$ of any vertex $u$ of tree $T_i$ can be obtained as the sum of all the weights $a(e)$ of thin edges $e$ from $\Gamma_i$ that are incident to the corresponding vertex $u$. Moreover, the weight $w_i(u)$ of a vertex $u$ from $T_i$ can be computed as the sum of all the weights $b(x)$  of vertices $x$ from $\Gamma_i$ that are identified with $u$.  Finally, the weight $c$ on the thick edges of $\Gamma_i$ corresponds to the weight $w_i'$. Therefore, the proof is complete. \qed
\end{proof}

\noindent
The following theorem is the main result of this section.

\begin{theorem}
If $G$ is a benzenoid system with a boundary cycle $Z$, then the indices $wSz(G)$, $wPI_v(G)$, $wSz_e(G)$, and $wPI(G)$ can be computed in $O(|Z|)$ time.
\end{theorem}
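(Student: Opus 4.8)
The plan is to assemble the three ingredients already prepared: Proposition~\ref{metoda_ben}, which rewrites each of $wSz(G)$, $wPI_v(G)$, $wSz_e(G)$, $wPI(G)$ as a sum of (constantly many) weighted indices of the quotient trees $T_1, T_2, T_3$; Lemma~\ref{tree_sub}, which builds each $T_i$ together with all four weights $w_i, w_i', \lambda_i, \lambda_i'$ in time $O(|Z|)$ (via Chazelle's algorithm); and Lemma~\ref{lema_tree}, which evaluates the weighted-tree indices on a tree in time linear in its number of vertices. Concretely, I would first run the construction of Lemma~\ref{tree_sub} three times (once for each direction $i \in \lbrace 1,2,3\rbrace$), obtaining the weighted trees $(T_i, w_i, \lambda_i, w_i', \lambda_i')$ in total time $O(|Z|)$; the input benzenoid is itself specified by its boundary cycle, so this phase also subsumes reading the instance.

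The decisive point, and the only place where one must add something beyond a bare citation of the lemmas, is the estimate $|V(T_i)| = O(|Z|)$. Since $T_i = G/F_i$, its vertices are the connected components of $G \setminus F_i$ and, by Lemma~\ref{sosednji}, its edges correspond to the elementary cuts (i.e.\ the $\Theta$-classes) of $G$ lying in direction $i$. Each elementary cut is a segment whose two endpoints are midpoints of distinct peripheral edges of $Z$, and each peripheral edge lies on at most one elementary cut; hence the total number of elementary cuts is at most $|Z|/2$, so in particular $|E(T_i)| = O(|Z|)$ and $|V(T_i)| = |E(T_i)| + 1 = O(|Z|)$. With this bound, Lemma~\ref{lema_tree} applied to $T_i$ computes $Sz(T_i,w_i,w_i')$, $PI_v(T_i,w_i,w_i')$, $Sz_t(T_i,\lambda_i,\lambda_i',w_i')$, $PI(T_i,\lambda_i',w_i')$, and $PI_v(T_i,\lambda_i,w_i')$ each in $O(|V(T_i)|) = O(|Z|)$ time. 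Summing the three direction contributions according to Proposition~\ref{metoda_ben} is then a constant number of additions, and the overall running time is $O(|Z|)$.

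I expect the main obstacle to be precisely this cardinality estimate: everything else is a direct, essentially mechanical composition of the already-established results, but one must argue carefully that the ``linear in the number of vertices'' guarantee of Lemma~\ref{lema_tree} is in fact ``linear in $|Z|$'' for the quotient trees in question — equivalently, that the number of elementary cuts of a benzenoid system (and hence the size of each $T_i$) is linear in the length of the boundary cycle rather than in $|V(G)|$. Once that is in place, the heavy lifting has already been done inside Lemma~\ref{tree_sub}, and the proof is complete.
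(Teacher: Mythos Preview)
Your proposal is correct and follows essentially the same route as the paper: invoke Lemma~\ref{tree_sub} to build the weighted trees in $O(|Z|)$ time, Lemma~\ref{lema_tree} to evaluate the weighted indices on each tree, and Proposition~\ref{metoda_ben} to sum the contributions. The paper's proof is terser and does not spell out why $|V(T_i)|=O(|Z|)$; this bound is already implicit in Lemma~\ref{tree_sub}, since $T_i$ is obtained by contracting the auxiliary tree $\Gamma_i$ whose vertex set $\mathcal{C}_i\cup\mathcal{C}_i'$ has size $O(|Z|)$ by construction, so your separate elementary-cut counting argument is a welcome clarification rather than a new ingredient. (One small remark: Lemma~\ref{sosednji} alone does not give the correspondence between edges of $T_i$ and elementary cuts; that is a standard fact about benzenoid systems, and you could equally well bypass it by citing the size of $\Gamma_i$ directly.)
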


\begin{proof}
By Lemma \ref{tree_sub}, the quotient trees $T_i$, $i \in \lbrace 1,2,3 \rbrace$, and the corresponding weights $w_i$, $w_i'$, $\lambda_i$, $\lambda_i'$ can be computed in $O(|Z|)$ time. Moreover, by Lemma \ref{lema_tree}, the indices $Sz(T_i,w_i,w_i')$, $PI_v(T_i,w_i,w_i')$, $Sz_t(T_i,\lambda_i,\lambda_i',w_i')$, $PI_v(T_i,\lambda_i,w_i')$, and $PI(T_i,\lambda_i',w_i')$ can be computed in $O(|Z|)$ time. The result now follows by Proposition \ref{metoda_ben}. \qed
\end{proof}

\section{Phenylenes}

Beside benzenoid hydrocarbons, phenylenes represent another interesting class of polycyclic conjugated molecules, whose properties have been extensively studied, see \cite{zhu,zigert-2018} {\s and the references listed therein}. Therefore, in this section we describe a method for an efficient calculation of weighted Szeged and PI indices of phenylenes. Moreover, it is shown how our method can be used to easily obtain closed formulas for the mentioned indices.

A benzenoid system is said to be \textit{catacondensed} if all its vertices belong to the outer face. Let $G'$ be a catacondensed benzenoid system. If we add squares between all pairs of adjacent hexagons of $G'$, the obtained graph $G$ is called a \textit{phenylene}. We then say that $G'$ is the \textit{hexagonal squeeze} of $G$ and denote it by $HS(G)=G'$.  Moreover, two distinct inner faces of $G$ or $G'$ with a common edge are called \textit{adjacent}. The \textit{inner dual} of a benzenoid system $G'$  is the graph which has hexagons of $G'$ as vertices, two being adjacent whenever  the corresponding hexagons are also adjacent. Obviously, the inner dual of a catacondensed benzenoid system is a tree.

Let $G$ be a phenylene and $G'$ the hexagonal squeeze of $G$. The edge set of $G'$ can be naturally partitioned into sets $F_1'$, $F_2'$, and $F_3'$ of edges of the same direction. Denote the sets of edges of $G$ corresponding to the edges in $F_1'$, $F_2'$, and $F_3'$ by $F_1, F_2$, and $F_3$, respectively. Moreover, let $F_4 = E(G) \setminus (F_1 \cup F_2 \cup F_3)$ be the set of all the edges of $G$ that do not belong to $G'$. Again we can easily see that phenylenes are partial cubes and that the partition $\lbrace F_1,F_2,F_3,F_4 \rbrace$ is a c-partition of the edge set $E(G)$. For $i \in \lbrace 1, 2, 3, 4 \rbrace$, set $T_i = G /F_i$. As in the previous section, we can see that $T_1$, $T_2$, $T_3$, and $T_4$ are trees. In a similar way we can define the quotient trees $T_1', T_2', T_3'$ of the hexagonal squeeze $G'$. Then the tree $T_i'$ is isomorphic to $T_i$ for $i=1,2,3$ and $T_4$ is isomorphic to the inner dual of $G'$ \cite{zigert-2018}. The following proposition follows by Theorem \ref{metoda_sz}.

\begin{proposition} \label{metoda_ph}
If $G$ is a phenylene, then
\begin{align*}
wSz(G)&=  \sum_{i=1}^4 Sz(T_i,w_i,w_i'), & wPI_v(G) &=  \sum_{i=1}^4 PI_v(T_i,w_i,w_i'), \\
wSz_e(G) & = \sum_{i=1}^4 Sz_t(T_i,\lambda_i,\lambda_i',w_i'), & wPI(G) &= \sum_{i=1}^4 \big( PI_v(T_i,\lambda_i,w_i') + PI(T_i,\lambda_i',w_i') \big),
\end{align*}
where the trees $T_i$, $i \in \lbrace 1,2,3,4 \rbrace$, are defined as above and the weights are defined as in Theorem \ref{metoda_sz}.
\end{proposition}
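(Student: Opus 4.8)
The plan is to obtain Proposition~\ref{metoda_ph} as an immediate specialization of Theorem~\ref{metoda_sz}: once we know that $\{F_1,F_2,F_3,F_4\}$ is a c-partition of $E(G)$, Theorem~\ref{metoda_sz} applied with $k=4$ and $G_i=T_i$ yields all four identities verbatim, because the weights $w_i,\lambda_i,w_i',\lambda_i'$ on $T_i$ used in the statement are precisely those prescribed by Theorem~\ref{metoda_sz}. So the entire task reduces to checking the c-partition property, i.e.\ that each of $F_1,F_2,F_3,F_4$ is a union of $\Theta^*$-classes of $G$.

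To do that I would first recall that a phenylene $G$ is a partial cube, so by Theorem~\ref{th:partial-k} we have $\Theta=\Theta^*$ and it is enough to understand the Djokovi\'{c}--Winkler classes. These come in two types. A class of the first type is an elementary cut of $G$ parallel to one of the three edge directions of the hexagonal squeeze $G'=HS(G)$; such a cut meets only edges of that single direction, hence lies inside the corresponding set $F_j$ with $j\in\{1,2,3\}$. A class of the second type consists of the two edges of some square of $G$ that are not inherited from $G'$, i.e.\ the pair of edges contracted by the hexagonal squeeze; opposite edges of a $4$-cycle are always $\Theta$-related, and in a phenylene this cut does not propagate past the square, so the class has exactly these two edges and lies inside $F_4$. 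Since every edge of $G$ lies in a class of one of these two types, each $F_j$ is a union of $\Theta^*$-classes, which is exactly the definition of a c-partition.

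With the c-partition established, the proof concludes in one line: apply Theorem~\ref{metoda_sz} and identify $G_i$ with $T_i$. For bookkeeping it is worth remarking that $T_1,T_2,T_3$ are isomorphic to the corresponding quotient trees of $G'$ while $T_4$ is isomorphic to the inner dual of $G'$, so that all four quotient graphs are trees; this is not needed for the identities themselves, but it can be cited from \cite{zigert-2018} and is what makes Proposition~\ref{metoda_ph} algorithmically useful.

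The only step with any real content is the middle one: showing that no $\Theta$-class of a phenylene mixes edges from two different sets $F_j$ --- equivalently, that the two ``squeeze'' edges and the two ``non-squeeze'' edges of each square always land in separate classes. I expect this to be the main (and only) obstacle, and I would dispatch it by appealing to the known partial-cube structure of phenylenes and the explicit description of their elementary cuts rather than re-deriving it from scratch.
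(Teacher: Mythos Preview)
Your proposal is correct and follows exactly the paper's approach: the paper simply notes (just before the proposition) that phenylenes are partial cubes and that $\{F_1,F_2,F_3,F_4\}$ is a c-partition, and then states that the proposition ``follows by Theorem~\ref{metoda_sz}.'' You have supplied more detail than the paper does on why the c-partition property holds, but the strategy is identical.
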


\noindent
As in the previous section, it follows by Lemma \ref{lema_tree} and Proposition \ref{metoda_ph} that the indices $wSz(G)$, $wPI_v(G)$, $wSz_e(G)$, and $wPI(G)$ of a phenylene $G$ can be computed in linear time with respect to the number of vertices in $G$.

In the rest of the section, we apply Proposition \ref{metoda_ph} to calculate the closed formulas for an infinite family of phenylenes, i.e.\ for the linear phenylenes. A hexagon $h$ of a phenylene that is adjacent to exactly two squares is called \textit{linear} if the two vertices of degree 2 in $h$ are not adjacent. A phenylene is called \textit{linear} if every hexagon is adjacent to at most two squares and any hexagon adjacent to exactly two squares is a linear hexagon. The linear phenylene with exactly $n$ hexagons will be denoted by $PH_n$ ($n \geq 2$), see Figure \ref{linear_phe}.

\begin{figure}[h!] 
\begin{center}
\includegraphics[scale=0.9,trim=0cm 0.4cm 0cm 0cm]{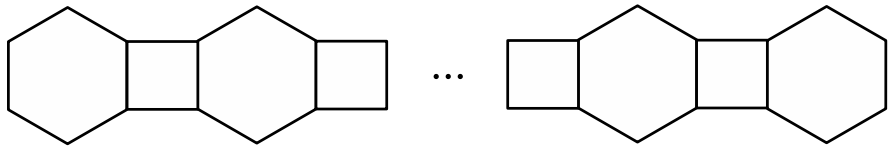}
\end{center}
\caption{\label{linear_phe} A linear phenylene.}
\end{figure}

\noindent
Let $F_1$ be the set of all the vertical edges in $PH_n$, let $F_4$ be the set of all the edges that do not lie on a hexagon, and let $F_2,F_3$ be the edges in the remaining two directions. We now compute the weighted quotient trees $T_1,T_2,T_3,T_4$, see Figure \ref{trees2_ph}.

\begin{figure}[h!] 
\begin{center}
\includegraphics[scale=0.65,trim=0cm 0.4cm 0cm 0cm]{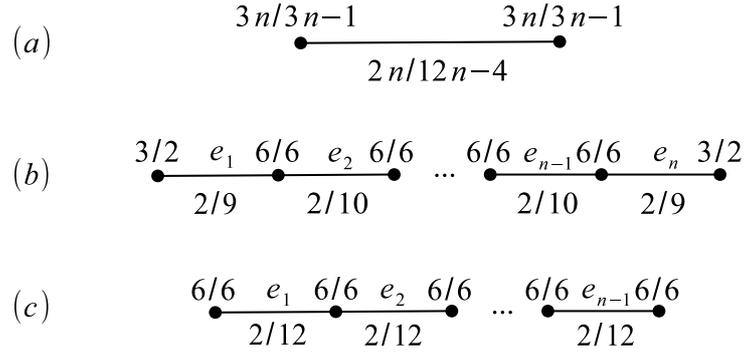}
\end{center}
\caption{\label{trees2_ph} Quotient trees (a) $T_1$, (b) $T_2 \cong T_3$, and (c) $T_4$ with vertex-weights $w_i/\lambda_i$ and edge-weights $\lambda_i'/w_i'$, $i \in \lbrace 1,2,3,4 \rbrace$.}
\end{figure}
\noindent
Firstly, we compute the corresponding indices of the weighted quotient tree $T_1$.
\begin{eqnarray*}
Sz(T_1,w_1,w_1') & = & (12n-4)\cdot (3n) \cdot (3n) = 108n^3 - 36n^2, \\
PI_v(T_1,w_1,w_1') & = & (12n-4)\cdot (3n + 3n) = 72n^2 - 24n,\\
Sz_t(T_1,\lambda_1,\lambda_1',w_1') & = & (12n-4)\cdot (3n-1) \cdot (3n-1) = 108n^3 - 108n^2 + 36n - 4,\\
PI_v(T_1,\lambda_1,w_1') & = & (12n-4)\cdot (3n-1 + 3n-1) = 72n^2 -48n + 8,\\
PI(T_1,\lambda_1',w_1') & = & (12n-4)\cdot (0 + 0) = 0.
\end{eqnarray*}
\noindent
Next, we calculate the corresponding indices of the weighted quotient tree $T_2$. Note that these indices are the same also for the quotient tree $T_3$.
\begin{eqnarray*}
Sz(T_2,w_2,w_2') & = & 2\cdot 9 \cdot 3 \cdot (6n-3) + \sum_{i=2}^{n-1} \big( 10 \cdot (6i-3) \cdot(6n-6i+3) \big)\\
& = & 60n^3 - 6n + 18, \\
PI_v(T_2,w_2,w_2') & = & 2\cdot 9 \cdot (3  + 6n-3) + \sum_{i=2}^{n-1} \big( 10 \cdot (6i-3 + 6n-6i+3) \big)\\
& = & 60n^2 - 12n, \\
Sz_t(T_2,\lambda_2,\lambda_2',w_2') & = & 2\cdot 9 \cdot (2 + 0) \cdot (6n-4 + 2n-2) \\
&+& \sum_{i=2}^{n-1} \big( 10 \cdot (6i-4 + 2i-2) \cdot(6n-6i+2 + 2n-2i) \big)\\
& = & \frac{1}{3} \big( 320n^3 - 480n^2 + 184n + 72 \big), \\
PI_v(T_2,\lambda_2,w_2') & = & 2\cdot 9 \cdot (2  + 6n-4) + \sum_{i=2}^{n-1} \big( 10 \cdot (6i-4 + 6n-6i+2) \big)\\
& = & 60n^2 - 32n + 4, \\
PI(T_2,\lambda_2',w_2') & = & 2\cdot 9 \cdot (0  + 2n-2) + \sum_{i=2}^{n-1} \big( 10 \cdot (2i-2 + 2n - 2i) \big)\\
& = & 20n^2 - 24n + 4. \\
\end{eqnarray*}
\noindent
Finally, we calculate these indices for the weighted quotient tree $T_4$.
\begin{eqnarray*}
Sz(T_4,w_4,w_4') & = & \sum_{i=1}^{n-1} \big( 12 \cdot (6i) \cdot(6n-6i) \big) = 72n^3 - 72n, \\
PI_v(T_4,w_4,w_4') & = & \sum_{i=1}^{n-1} \big( 12 \cdot (6i + 6n - 6i) \big) = 72n^2 - 72n,\\
Sz_t(T_4,\lambda_4,\lambda_4',w_4') & = & \sum_{i=1}^{n-1} \big( 12 \cdot (6i + 2i-2) \cdot(6n-6i + 2n-2i-2) \big)\\
& = & 128n^3 - 192n^2 + 112n - 48, \\
PI_v(T_4,\lambda_4,w_4') & = & \sum_{i=1}^{n-1} \big( 12 \cdot (6i + 6n - 6i) \big) = 72n^2 - 72n,\\
PI(T_4,\lambda_4',w_4') & = & \sum_{i=1}^{n-1} \big( 12 \cdot (2i -2 + 2n - 2i - 2) \big) = 24n^2 - 72n + 48.\\
\end{eqnarray*}
\noindent
Now we apply Proposition \ref{metoda_ph} to compute closed formulas for the weighted Szeged index, the weighted vertex-PI index, the weighted edge-Szeged index, and the weighted PI index of a linear phenylene $PH_n$, $n \geq 2$:
\begin{eqnarray*}
wSz(PH_n) & = & 300n^3 - 36n^2 - 84n +36,\\
wPI_v(PH_n) & = & 264n^2 - 120n,\\
wSz_e(PH_n) & = & \frac{1}{3} \big( 1348n^3 - 1860n^2 + 812n - 12 \big),\\
wPI(PH_n) & = & 328n^2 - 304n + 72.
\end{eqnarray*}
\section{Concluding remarks}

In the previous sections we designed new methods for the computation of weighted Szeged and PI indices. Moreover, it was shown how our methods can be used to efficiently calculate these indices on important molecular graphs. 

However, these indices can be also defined in a different way if we replace the term ${\deg}(u) + {\deg}(v)$ by the term ${\deg}(u) {\deg}(v)$, which is used in the definition of the Gutman index (see \cite{brez-trat} for more information about this index). Therefore, we obtain the following indices:
\begin{eqnarray*}
wSz^*(G)&=&  \sum_{e=uv \in E(G)}\deg(u)\deg(v)n_u(e)n_v(e), \\ wPI^*_v(G) &=&  \sum_{e=uv \in E(G)}\deg(u)\deg(v)\big( n_u(e) + n_v(e)\big), \\
wSz^*_e(G) & =& \sum_{e=uv \in E(G)}\deg(u)\deg(v)m_u(e)m_v(e), \\ wPI^*(G) &=&\sum_{e=uv \in E(G)}\deg(u)\deg(v)\big( m_u(e) + m_v(e)\big).
\end{eqnarray*}
 
\noindent 
The following theorem follows by Theorem \ref{glavni}.
\begin{theorem} \label{metoda_sz1}
If $G$ is a connected graph and $\lbrace F_1,\ldots, F_k \rbrace$ is a c-partition of the set $E(G)$, then
\begin{align*}
wSz^*(G)&=  \sum_{i=1}^k Sz(G_i,w_i,w_i'), & wPI^*_v(G) &=  \sum_{i=1}^k PI_v(G_i,w_i,w_i'), \\
wSz^*_e(G) & = \sum_{i=1}^k Sz_t(G_i,\lambda_i,\lambda_i',w_i'), & wPI^*(G) &= \sum_{i=1}^k \big( PI_v(G_i,\lambda_i,w_i') + PI(G_i,\lambda_i',w_i') \big),
\end{align*}
where $w_i,\lambda_i: V(G_i) \rightarrow \mathbb{R}_0^+$ and $\lambda_i': E(G_i) \rightarrow \mathbb{R}_0^+$ are defined as in Theorem \ref{metoda_sz}. Moreover, $w_i': E(G_i) \rightarrow \mathbb{R}_0^+$ is defined as follows: {\s $w_i'(E) = \displaystyle \sum_{xy \in \widehat{E}} \deg(x) \deg(y)$} for any $E \in E(G_i)$.
\end{theorem}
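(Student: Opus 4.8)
The plan is to deduce Theorem \ref{metoda_sz1} directly from Theorem \ref{glavni}, exactly as Theorem \ref{metoda_sz} was obtained, the only change being the choice of the edge-weight $w'$. Concretely, for a connected graph $G$ I would introduce the weights $w,w',\lambda'$ defined by $w(x)=1$ for every $x\in V(G)$, $\lambda'(e)=1$ for every $e\in E(G)$, and $w'(e)=\deg(x)\deg(y)$ for every $e=xy\in E(G)$.

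Next I would observe that with these choices the weighted quantities collapse to the unweighted ones: since $w\equiv 1$ we have $n_u(e|(G,w))=|N_u(e|G)|=n_u(e)$ and likewise $n_v(e|(G,w))=n_v(e)$, and since $\lambda'\equiv 1$ we have $m_u(e|(G,\lambda'))=m_u(e)$ and $m_v(e|(G,\lambda'))=m_v(e)$. Plugging these identities into the definitions of $Sz(G,w,w')$, $PI_v(G,w,w')$, $Sz_e(G,\lambda',w')$, and $PI(G,\lambda',w')$ and using $w'(e)=\deg(x)\deg(y)$, one reads off that
\begin{align*}
wSz^*(G)&=Sz(G,w,w'), & wPI^*_v(G)&=PI_v(G,w,w'),\\
wSz^*_e(G)&=Sz_e(G,\lambda',w'), & wPI^*(G)&=PI(G,\lambda',w').
\end{align*}

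Then I would apply Theorem \ref{glavni} to the weighted graph $(G,w,\lambda',w')$, which expresses each of the four right-hand sides as a sum over the quotient graphs $G_i$ of the corresponding indices with the induced weights $w_i,\lambda_i,w_i',\lambda_i'$. It only remains to identify these induced weights. Since $w\equiv 1$, $w_i(X)=\sum_{x\in V(X)}1$ is the number of vertices of the component $X$; since $\lambda'\equiv 1$, $\lambda_i(X)=\sum_{e\in E(X)}1$ is the number of edges of $X$ and $\lambda_i'(E)=\sum_{e\in\widehat{E}}1=|\widehat{E}|$ — these are precisely the weights of Theorem \ref{metoda_sz}. Finally $w_i'(E)=\sum_{e\in\widehat{E}}w'(e)=\sum_{xy\in\widehat{E}}\deg(x)\deg(y)$, which is the formula claimed.

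The argument is essentially routine; the only point requiring a moment's care is the last identification of $w_i'$, i.e.\ checking that the aggregation rule $w_i'(E)=\sum_{e\in\widehat{E}}w'(e)$ from the definition preceding Theorem \ref{glavni} does turn the edge-weight $\deg(x)\deg(y)$ into $\sum_{xy\in\widehat{E}}\deg(x)\deg(y)$ and that no degrees need to be recomputed in the quotient (they are read off from $G$, not from $G_i$). There is no real obstacle beyond bookkeeping, so I would keep the write-up as short as the proof of Theorem \ref{metoda_sz}.
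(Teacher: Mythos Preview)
Your proposal is correct and follows exactly the route the paper takes: the paper simply states that the theorem follows from Theorem \ref{glavni}, and your argument spells out precisely this reduction by choosing $w\equiv 1$, $\lambda'\equiv 1$, and $w'(xy)=\deg(x)\deg(y)$, then identifying the induced quotient weights. There is nothing to add beyond the bookkeeping you already outlined.
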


\noindent
It is easy to see that the results similar to the results from Section 4 and Section 5 hold also for these indices.

\section*{Funding information} 

\noindent The author Niko Tratnik acknowledge the financial support from the Slovenian Research Agency (research core funding No. P1-0297 and J1-9109). 

\baselineskip=17pt

\end{document}